\pgfplotsset{compat = newest}
\newtheorem{theorem}{Theorem}
\newtheorem{proposition}[theorem]{Proposition}
\newtheorem{definition}[theorem]{Definition}
\newtheorem{corollary}[theorem]{Corollary}
\newtheorem{lemma}[theorem]{Lemma}
\newtheorem{remark}[theorem]{Remark}
\newtheorem{example}[theorem]{Example}
\newenvironment{proof}[1][Proof]{\noindent\textbf{#1.} }{\ \rule{0.5em}{0.5em} \newline}
\def\qed{\hbox{${\vcenter{\vbox{		 
   \hrule height 0.4pt\hbox{\vrule width 0.5pt height 6pt
   \kern5pt\vrule width 0.5pt}\hrule height 0.4pt}}}$}}
\def\cD{\mathcal D}
\def\cF{\mathcal F}
\def\cL{\mathcal L}
\def\bE{\mathbb E}
\def\bL{\mathbb L}
\def\bP{\mathbb{P}}
\def\bQ{\mathbb Q}
\def\bR{\mathbb R}
\def\exist{\mathop{\exists}\limits}
\newcommand\myeq{\mathrel{\overset{\makebox[0pt]{\mbox{\normalfont\tiny\sffamily $d$}}}{=}}}
\newcommand{\red}[1]{\textcolor{red}{#1}}
\newcommand{\blue}[1]{\textcolor{black}{#1}}
\newcommand{\ste}[1]{\textcolor{magenta}{#1}}
\title{Gaussian Volterra processes as models of electricity markets} 
\author{Yuliya Mishura \footnote{Department of Probability, Statistics and Actuarial Mathematics, Taras Shevchenko National University of Kyiv, M\"{a}lardalen University, {\tt yuliyamishura@knu.ua}}  \and Stefania Ottaviano\footnote{Department of Mathematics ``Tullio Levi Civita”, University of Padova, {\tt stefania.ottaviano@math.unipd.it}} \and Tiziano Vargiolu\footnote{Department of Mathematics ``Tullio Levi Civita”, University of Padova, {\tt vargiolu@math.unipd.it}} }
\date{}
\begin{document}

\maketitle
\begin{abstract}
We introduce a non-Markovian model for electricity markets where the spot price of electricity is driven by several Gaussian Volterra processes, which can be e.g., fractional Brownian motions (fBms), Riemann-Liouville  processes or Gaussian-Volterra driven Ornstein-Uhlenbeck processes. 
 Since in energy markets the spot price is not a tradeable asset, due to the limited storage possibilities, forward contracts are considered as traded products. 
We 
 ensure necessary and sufficient conditions for the absence of arbitrage that, in this kind of market, reflects the fact that the prices of the forward contracts are (Gaussian) martingales under a risk-neutral measure. 
Moreover, we characterize the market completeness in terms of the number of forward contracts simultaneously considered and of the kernels of the Gaussian-Volterra processes.\\
\blue{We also provide a novel representation of Ornstein-Uhlenbeck (OU) processes driven by Gaussian Volterra processes. Also exploiting this result, we show analytically that, for some kinds of Gaussian-Volterra processes driving the spot prices, under conditions ensuring the absence of arbitrage, the market 
 is complete.    \\
Finally, we formulate a 
 portfolio optimization problem 
for an agent who invests in an electricity market, and we solve it explicitly in the case of CRRA utility functions. We also find closed formulas for the price of options written on forward contracts, together with the hedging strategy.}
 \\ 
\end{abstract}

\noindent {\bf Keywords:} fractional Brownian motion, Gaussian Volterra process, electricity markets, forward prices, utility maximization, stochastic control  

\noindent  {\bf MSC codes: 60G15, 60G22, 91B16, 91G15, 91G20} 

\section{Introduction}

Soon after the liberalization of electricity markets in Europe, I. Simonsen in \cite{Simonsen} presented empirical evidence that electricity prices demonstrate
 stylized facts (most notably, antipersistence and self-similarity) which are well explained by fractional Brownian motion (fBm). This was confirmed in several successive empirical studies (see e.g. \cite{GioMor} and references therein). However, the wide application of fBm in energy markets, as well as  in general  financial markets, was hindered by the fact that fBm is not a semimartingale, thus (roughly speaking) it produces the possibility of arbitrage (see e.g. the wide discussions on this aspect in \cite[Chapter 7]{BHOZ} and \cite[Chapter 5]{Mishura}). Partial remedies to this could be the introduction of transaction costs, or the use of mixed-fBm models; however, both of these models are comparatively difficult to consider analytically.

The aim of this paper is to show that it is indeed possible to use fBm, and even more general Gaussian Volterra processes, in electricity markets without introducing arbitrage. We also provide an application of our model to portfolio optimization  and option pricing problem. 

In order to illustrate our approach, we start with the following toy model.   Assume that the spot price of electricity is of the form $S_t = \varphi(t) + \bar S_t$, where $\varphi$ is a deterministic seasonality function and $\bar S$ evolves according to Langevin equation as 
\begin{equation} \label{OUintro}
d\bar S_t = - \lambda \bar S_t\ dt + \sigma\ dB^H_t 
\end{equation}
under the real-world probability measure $\bP$, 
where $\lambda > 0$ is the mean-reversion speed, $\sigma > 0$ is a constant volatility and $B^H$ is a fBm with Hurst exponent $H \in (0,1)$ (the value   $H = 1/2$ corresponds to the well-known case when $B^H$ is a standard Brownian motion) with respect to a given filtration $(\cF_t)_{t \geq 0}$. This means that $B^H$ is a centered Gaussian process such that $B^H(0) = 0$ and that its covariance function is given by
$$ \bE[B^H_s B^H_t] = \frac12 (s^{2H} + t^{2H} - |t - s|^{2H} ). $$
In turn, this implies that the correlation of future increments of $B^H$ with the past trajectory is positive (i.e., $B^H$ is {\em persistent}) when $H > 1/2$ and negative (i.e., $B^H$ is {\em antipersistent}) when $H < 1/2$. 
 Moreover, for $H > 1/2$, the decay of this dependence as the time intervals grow
apart is slow, and we talk about long-range dependence (long memory). For $H < 1/2$, this decay
is fast, and referred by short-range dependence (short memory).
The Hurst index is also a measure for the roughness of the paths of
fBm: the larger the Hurst index, the smoother the paths. Other basic properties captured by $H$ are stationarity of increments. that is $(B^H_{t+h}-B_h^H)_{t \geq 0} \myeq (B_t)_{t \geq 0}$, $h>0$ and
self-similarity, that is $(a^{-H}B^H_{at})_{t \geq 0}\myeq (B_t^H)_{t \geq 0}$, $a>0$. 
It can be shown that $B^H$ is the unique centered $H$-self-similar
Gaussian process with stationary increments. We underline, moreover, that if $H\in (0,\frac{1}{2}) \cup (\frac{1}{2},1)$ the fBm is neither a Markov process nor a semimartingale (as most Gaussian-Volterra processes).
\cite{JC1,Mishura}.
These 
properties are transferred also to the process $\bar S$, and consequently to the spot price $S$.

One of the main peculiarities of electricity market is however that the spot price of electricity $S$ is not liquidly traded. What is traded instead are forwards with delivery periods.
 In our model, we focus on the instantaneous forward price $F(t,T)$, where $T \leq \bar T$ is the time to maturity and $t \in [0,T]$. 
Assuming that there exists a risk-neutral pricing measure $\bQ$ equivalent to the measure $\bP$, the forward price of electricity at time $t$ with maturity $T > t$ can be expressed as  
\begin{equation} \label{forwardintro}
F(t,T) = \bE_\bQ[S_T\ |\ \cF_t].
\end{equation}
This means that, under the pricing measure $\bQ$, the prices $F(\cdot,T)$ of traded forward prices are  automatically martingales for any maturity $T$. We take   the filtration $(\cF_t)_{t\ge 0}$ that is the filtration  generated by the   underlying Brownian motion   $W$,  for which the fBm $B^H$ can be written via   Molchan-Golosov  representation
(\cite[Thm 5.2]{NVV})  as 
\begin{equation} \label{MVNintro}
B^H_t = \int_0^t K(t,s)\ dW_s, \qquad \forall t \geq 0,  
\end{equation}
where $K$ is a deterministic Volterra-type kernel (more details are given 
 in Section \ref{GVP}). The filtrations generated by $W$ and $B^H$ coincide.  Thus,  in this case the dynamics of $F(\cdot,T)$ under $\bQ$ is given by the relation 
\begin{equation} \label{dFintro}
dF(t,T) = \bar{K}(T,t)\ dW^\bQ_t,
\end{equation}
where $W^\bQ$ is a $\bQ$-Brownian motion, obtained from $W$ via a Girsanov transformation, and $\bar{K}$ is a Volterra kernel, connected to $K$ via an exponent function appearing when we solve Langevin equation  \label{OUintro} (see Proposition \ref{prop17} for the details). 

This toy model can be generalized in several directions. We do this by assuming that the deseasonalized spot price $\bar S$ is the sum of several Gaussian factors, each one being a different Volterra process with respect to a $n$-dimensional standard Brownian motion $W$. This Volterra processes can be e.g. fBms, or Riemann-Liouville processes, or fBm-driven Ornstein-Uhlenbeck mean-reverting processes as in Equation \eqref{OUintro}, possibly with different Hurst exponents; as particular case of this we could have mixed fBm, which are sums of fractional and standard Brownian motions. By assuming that the filtration $(\cF_t)_{t\ge 0} $ is generated by $W$ and that there exists a pricing measure $\bQ$ equivalent to $\bP$ such that the forward prices are defined by Equation \eqref{forwardintro}, we prove that their dynamics follow a multidimensional version of Equation \eqref{dFintro}. Thus, starting from the spot price being a possibly non-Markovian and non-semimartingale process,  we obtain  that forward prices $F(\cdot,T)$, with any fixed maturity $T$, are Gaussian martingales. We also provide   necessary   and sufficient conditions for this market to be complete, based on the number of forward contracts simultaneously traded in the market and on the kernels of the Volterra processes.

In doing this, we follow the main stream of literature, justified also by market practice, to model the forward prices $F(t,T)$, relative to an instantaneous delivery of electricity at time $T$. However, the real contracts traded in electricity markets use instead the quantities $F(t,T_j,T_k)$, each one of these being the price at time $t$ of a so-called {\em flow forward} or {\em swap} contract (see e.g. \cite{benth2008,BPV,HinWag,PSV}), which delivers a fixed intensity of electricity over the period $[T_1,T_2]$, where $0\le t \le T_1 < T_2$. A common market practice when the delivery period $T_2 - T_1$ is small (for example, for hourly or daily forward contracts) is to model not directly the traded quantities $F(t,T_1,T_2)$, but rather to model $F(t,\tilde T)$ with $\tilde T \in [T_1,T_2]$ as a proxy (for example, $\tilde T$ being the middle point of $[T_1,T_2]$). We show in Lemma \ref{2H} that this market practice is justified also in our framework, as we prove that, if the kernel $K(T,t)$ is H\"older in $T$ with some exponent $\rho$ (feature shared by many kernels of fractional processes, see Section 5 for details), then the tracking error between the real price $F(t,T_1,T_2)$ and the proxy $F(t,\tilde T)$ tends to zero with order $(T_2 - T_1)^{2\rho}$. 
\blue{However, let us note that when the delivery period is of the order of one month, one trimester or one year, in which case the approximation of Lemma \ref{2H} can possibly be not accurate enough, we can still use the results of absence of arbitrage and completeness of our model, but take care of substituting the kernels $K(T_j,t)$, relative to ``instantaneous'' forwards introduced in Equation \eqref{S-F}, with the kernels $\bar K(t,T_j,T_k)$ in Equation \eqref{barK}, relative to the forward contracts which are actually traded in the market.}
 
\blue{We provide some examples of Gaussian Volterra processes which can be used to model electricity spot prices, such as Riemann-Liouville (RL) processes, fractional Brownian motions, 
as well as Ornstein-Uhlenbeck (OU) processes driven by them, and lastly a mixed case where we consider an OU process and an OU process driven by a fBm. We show analytically that, when these kind of processes drives the spot prices, under conditions ensuring the absence of arbitrage, market completeness is satisfied.
We also provide a novel representation of Ornstein-Uhlenbeck (OU) processes driven by Gaussian Volterra processes and, accordingly, we show how to treat fractional Brownian motions as a particular case of fractional OU processes. When this can be applied, we use this result for our analysis.} 

We apply this model to the problem of portfolio optimization in electricity markets. More precisely, we assume that an agent can trade in several forward contracts having maturities $T_1 < T_2 < \ldots < T_m$, and wants to find the optimal portfolio by maximizing the expected utility of the 
terminal wealth, thus resulting in a stochastic control problem. It is well known that, for non-Markovian processes  
classical methods based on dynamic programming and Hamilton-Jacobi-Bellman equation are not suited. Apart from the special case of linear-quadratic problems, methods which are used in this framework are based on the stochastic maximum principle or on the martingale approach (see e.g. \cite[Chapter 9]{BHOZ} for several examples). Here we choose to use the martingale approach: however, differently from \cite[Chapter 9.5]{BHOZ}, our framework allows us to be free not to use complex tools like the Wick-Ito-Skorohod integral. Instead, by using the Molchan-Golosov compact interval representation in Equation \eqref{MVNintro} and the dynamics \eqref{dFintro} implied by it, we are able to represent the optimal {wealth process} as a suitable deterministic function, depending on the utility function and on the initial capital, computed in terms of the density $\frac{d\bQ}{d\bP}$, which is of Girsanov type. Assuming that the Girsanov kernel in $\frac{d\bQ}{d\bP}$ is deterministic, we are able to provide analytically the optimal portfolio {strategy for an agent maximising a Constant Relative Risk Aversion utility}.
Then, we consider the problem of pricing for vanilla calls and puts written on a traded forward, by using a version of the Bachelier formula; in this case we also provide the hedging strategy. We give also the pricing formula for a more exotic derivative used in electricity market, called reliability option.

The paper is organized as follows. In Section 2, we first introduce  the model in its full generality, with the spot price driven by $m$ Gaussian-Volterra processes. Then, we provide sufficient and necessary conditions for the absence of arbitrage, as well as for the completeness of the market. 
\blue{In Section 3 we give some examples of Gaussian Volterra processes which can be used to model electricity spot prices, {examples which include various kinds of fractional processes}. We show analytically that, when this kind of processes drives the spot prices, under conditions ensuring the absence of arbitrage, market completeness is satisfied. We point out that in this section there are mathematical results that are interesting on their own, such as Proposition \ref{prop17} and Lemma \ref{lemmaKY}, where we provide a novel representation of Ornstein-Uhlenbeck (OU) processes driven by Gaussian Volterra processes. 
Section 4 states the optimization portfolio problem in terms of utility maximisation problem for an agent who invests in an electricity market; we present explicit solution to this problem for the case of Constant Relative Risk Aversion (CRRA) utility functions. 
Finally, in Section 5, we present the problem of pricing and hedging for classical vanilla options, i.e. calls and
puts, written on the forward contracts, as well as the pricing of the so-called Reliability Options.}


\section*{Acknowledgements}

This work was initiated while the first author was visiting the University of Padova in July and August 2022 under the framework of ``UniPD Rescue Fund''. The first  author is also supported by The Swedish Foundation for Strategic Research, grant Nr. UKR22-
0017 and by Japan Science and Technology Agency CREST, project reference number JPMJCR2115.\\ 
The second author is supported by the European Union - FSE-REACT-EU, PON Research and Innovation 2014-2020 DM1062/2021 and by the INdAM - GNAMPA Project code CUP E53C23001670001.\\
The third author 
is supported by the INdAM - GNAMPA Project code CUP E53C23001670001 and by the 
projects funded by the EuropeanUnion – NextGenerationEU under the National Recovery and Resilience Plan (NRRP), Mission 4 Component 2 Investment 1.1 - Call PRIN 2022 No. 104 of February 2, 2022 of Italian Ministry of University and Research; Project 2022BEMMLZ (subject area: PE - Physical Sciences and Engineering) ``Stochastic control and games and the role of information'', and Call PRIN 2022 PNRR No. 1409 of September 14, 2022 of Italian Ministry of
University and Research; Project P20224TM7Z (subject area: PE - Physical
Sciences and Engineering) ``Probabilistic methods for energy transition''. 
The authors wish to thank for fruitful discussions 
Fabio Antonelli,
Pietro Manzoni,
Marco Mastrogiovanni,
Wolfgang Runggaldier, Anton Yurchenko-Tytarenko, and also
\blue{two anonymous referees for their valuable comments and suggestions which helped to improve this paper.}


\section{The model}\label{Sec:theModel}

Let $(\Omega, \cF, \bP )$ be a probability space endowed with the filtration $(\cF_t )_{t \geq 0}$ generated by a $n$-dimensional standard Brownian motion $\{W_t, t \geq 0\}$,  completed with all the $\bP$-null sets. We assume that all the stochastic processes considered below are adapted to this filtration. 
 
We now fix a final horizon $\bar T   < + \infty $, and assume that the spot price of electricity at each time $t \in [0,\bar T]$ is a $n$-factor process, i.e. driven by $n$ Gaussian Volterra processes, see, e.g. \cite{MSS}, as
\begin{equation}\label{St}
S_{ t}  = \sum_{i=1}^n \int_0^{ t}  K_i(t,s) dW^i_s + \varphi(t) =  
 \int_0^{t} K(t,s) \cdot dW_s + \varphi(t),
\end{equation}
where the deterministic functions $K_i : [0,\bar T]^2 \to \bR$, $i = 1,\ldots,n$ are Volterra kernels  such that $\int_0^{t} K_i^2(t,s) ds < \infty$ for all $t \in [0,\bar T]$, $i = 1,\ldots,n$, and $\varphi \in C^0([0,\bar T])$ is a deterministic seasonality function. The integral term in the right-hand side of Equation \eqref{St} contains a scalar product  of  the $n$-dimensional Brownian motion $W$ and the $\bR^n$-valued function $K$, defined as
\begin{equation} \label{Kvec}
K(t,s) := (K_1(t,s), \ldots, K_n(t,s)) \qquad \forall s, t \in [0,\bar T].
\end{equation}
 
This allows the process $S$ to be the sum of several Gaussian components, e.g. the sum of a Brownian motion and an Ornstein-Uhlenbeck (OU) process as in \cite{LucSch}, or a variant of this model driven by fractional Brownian motions and/or fractional OU processes, possibly with different Hurst exponents, and so on. As one can see,  the toy model presented in the Introduction  corresponds to a fractional OU process.  More details about these models, together with details about the Volterra kernels suited for this, can be found in Section \ref{GVP}.
 
We assume that the spot price of electricity $S = (S_{t})_{t\in[0,\bar{T}]}$ is not traded in the market, but the traded products are instead $m$ forward contracts with maturities $0 < T_1 < \ldots < T_m \leq \bar T$, and we assume that the general 
dynamics of the $j$th forward price $F(\cdot,T_j)$ with maturity $T_j$, $j = 1,\ldots,m$, evolves as 
\begin{equation} \label{FP}
dF(t,T_j) = \mu(t,T_j)\ dt + \sigma(t,T_j) \cdot dW_t 
\end{equation}
for $t < T_j$, with the terminal condition $F(T_j,T_j) = S_{T_j}$, where $\mu$ and $\sigma$ are deterministic functions defined for $0 \leq t \leq T_j \leq \bar T$,
with values respectively in ${\mathbb R}$ and ${\mathbb R}^n$, such that $\mu(\cdot,T_j)$ is integrable and $\sigma(\cdot,T_j)$ is square integrable on $[0,T_j]$  for all $j = 1,\ldots,m$.
{To complete the picture, we assume (as usual) that also a riskless asset can be traded: the price of this asset can be assumed to be identically equal to 1 without loss of generality (see e.g. \cite[Chapter 10.3]{Bjork} or Definition 12.1.1. and subsequent comment in \cite{Oksendal} for details).}


In the following section, we will discuss the conditions for absence of arbitrage and completeness in our market. 
 In doing this, we follow closely the framework present in \cite{Oksendal}, which is especially suited to the general dynamics presented in Equation \eqref{FP}\footnote{ whereas other similar sources  are more specialized in assuming that asset prices are strictly positive, see e.g. \cite{Bjork,Pascucci}}. As it is well known, these notions are related to that of portfolio. As we already pointed out, in electricity markets, the spot price $S$ is not 
traded (and, if a proxy for that is quoted, it cannot be stored without transaction costs, that in this case means some energy loss), while forward contracts with maturities $T_j$, $j = 1,\ldots,m$ are.
Thus, we assume that an agent can build a financial portfolio by investing at time $t$ the quantity $\Delta_t^j$ in the forward contract with maturity $T_j$, $j=1, \ldots, m$.  To keep things simple, we assume that our agent is interested in trading in this market only while all these forward contracts are still traded: since each forward contract is defined only until its maturity $T_j$, the global portfolio position $\Delta_t := ({\Delta_t^0},\Delta^1_t,\ldots,\Delta^m_t)$ is well-defined only for $t \in [0,T_1]$, being $T_1$ the first maturity of our forwards: thus, we assume that our agent trades in the market only until a final time $T$, with $T \leq T_1$.  As a consequence, we will give all the relevant definitions about portfolio, arbitrage and completeness relative to the final horizon $T$. 

Starting from the discussion above,
we assume that an agent builds a financial portfolio by investing at time $t \in [0,T]$, with $T \leq T_1$, {the quantity $\Delta_t^0$ in the riskless asset, and} the quantity $\Delta_t^j$ in the forward contract with maturity $T_j$, $j=1, \ldots, m$.  This results in a {\bf portfolio value} defined as $X_t^\Delta := {\Delta_t^0 +} \sum_{j=1}^n \Delta^j_t F(t,T_j)$. 
We call this portfolio {\bf self-financing} if the dynamics of its value is 
\begin{equation} \label{dX1}
dX_t^\Delta = \sum_{j=1}^n \Delta^j_t dF(t,T_j), \qquad t \in (0,T], 
\end{equation}
with initial condition $X_0^\Delta := x  \geq  0$. 
 %
In this definition, 
we implicitly
assume that  $(\Delta_t^j \mu(t,T_j))_t$ and $(\Delta^j_t  \sigma(t,T_j))_t$ are progressively measurable processes with sample paths belonging to $L^1([0,T])$ and $L^2
([0,T])$ for almost all $\omega$, respectively.  
We call a self-financing portfolio {\bf admissible} if there exists $C \in \bR$ such that $X_t^\Delta \geq C$ for almost all $(t,\omega) \in [0,T] \times \Omega$. Finally, an admissible portfolio is called an {\bf arbitrage} if $X_0^\Delta \equiv 0$, $\bP\{X_T^\Delta \ge 0\} = 1$ and $\bP\{X_T^\Delta > 0\} > 0$. 

 In order to rule out the possibility of arbitrage, in the next section we present the classical criterion of existence of an equivalent martingale measure. 

\subsection{ Absence of arbitrage}\label{aac}



Let us consider a function $\theta:[0,\bar T] \to \bR^n$
such that 
\begin{equation*}\label{theta}
    \int_0^T |\theta|^2_s ds < \infty.
\end{equation*}
Also,  consider another probability measure $\bQ \sim \bP$ on $(\Omega, \cF_{\bar T} )$ whose density process 
with respect to $\bP$ for any $t \in [0,\bar T]$ 
is given by 
\begin{equation} \label{ZT}
Z_t = \left. \frac{d \bQ}{d \bP} \right|_{\cF_t}
=\exp{\left( -\int_0^t \theta_s \cdot d W_s -\frac{1}{2} \int_0^t |\theta_s|^2  ds \right)}, 
\end{equation}
where 
$\int_0^t \theta_s \cdot d W_s = \sum_{i=1}^n \int_0^t \theta^i_s d W^i_s$.
 If this measure $\bQ$ is such that the price of all traded forward contracts in the market are $\bQ$-martingales, then $\bQ$ is an equivalent martingale measure (EMM) and no arbitrage opportunities exist  in the market \cite[Lemma 12.1.6]{Oksendal}.
In particular, for forward prices $F(\cdot,T_j)$, $j=1,\ldots,m$, this, together with the terminal condition $F(T_j,T_j) = S_{T_j}$, implies that each forward price satisfies
\begin{equation} \label{S-F}
F(t,T_j) = \bE_{\bQ} \left[ S_{T_j}| \cF_t\right],  \quad t \in [0,T_j],  \quad j=1,\ldots,m. 
\end{equation}
 Thus, we have a pricing relation between the spot and
forward price, that leads to an arbitrage-free pricing dynamics for the forward
price. 

 Now, we provide a necessary and sufficient condition for the absence of arbitrage in the market. 
\begin{proposition}  \label{proposition1}
Let $K(T_j,t)$ be defined by Equation \eqref{Kvec} and $\bQ \sim \bP$ be defined by Equation \eqref{ZT}  with $\int_0^{\bar T} |\theta|^2_s ds < \infty$. 
Then the process $W^\bQ = \{ W^\bQ_t, t \in [0,\bar T]\}$, defined as
\begin{equation} \label{WQ}
W_t^{\bQ} = W_t+\int_0^t \theta_s ds, \qquad t \in [0,\bar T]
\end{equation} 
is a Brownian motion on $\left(\Omega, \cF,(\cF_t)_{t  \in [0,\bar T]}, \bQ \right)$, and we have that
\begin{eqnarray} 
S_{t} 
& = &  \int_0^{t} K(t,s) \cdot dW_s^\bQ + \varphi_\bQ(t), \label{newphi2}
\end{eqnarray} 
where 
\begin{equation} \label{phiQ}
\varphi_\bQ(t) := \varphi(t)-\int_0^{t} K(t,s) \cdot \theta_s ds
\end{equation}
is the new deterministic seasonality function under $\bQ$. 
Moreover,  there is no arbitrage in the market, i.e. Equation \eqref{S-F} holds, if and only if the dynamics of $F(\cdot,T_j)$ under $\bQ$ is 
\begin{equation} \label{FQ}
d F(t,T_j) = K(T_j,t) \cdot dW_t^{\bQ}, \quad t \in [0,T_j], \qquad F(T_j,T_j) = S_{T_j}, 
\end{equation}
and the dynamics of each $F(\cdot,T_j)$, $j = 1,\ldots,m$, under $\bP$ is given by Equation \eqref{FP}, where 
\begin{equation}\label{musigma}
 \mu(t,T_j) = K(T_j,t) \cdot \theta_t \quad \mbox{ and } \quad \sigma(t,T_j) = K(T_j,t), \quad \text{for a.a} \quad t \leq T_j. 
\end{equation}
\end{proposition}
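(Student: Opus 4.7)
The plan is to verify the three claims in sequence: (i) that $W^\bQ$ is a $\bQ$-Brownian motion, (ii) the Volterra representation of $S$ under $\bQ$ with the modified seasonality $\varphi_\bQ$, and (iii) the equivalence between absence of arbitrage and the specified $(\mu,\sigma)$ structure of $F(\cdot,T_j)$ under $\bP$ and $\bQ$.

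Step (i) is immediate from the Girsanov theorem, provided the stochastic exponential $Z$ in Equation \eqref{ZT} is a true martingale on $[0,\bar T]$; this is guaranteed by the deterministic nature of $\theta$ (or more generally by Novikov's condition, which is automatic since $\int_0^{\bar T}|\theta_s|^2 ds < \infty$ and $\theta$ is deterministic). Step (ii) is then a direct substitution: using $dW_s = dW^\bQ_s - \theta_s ds$ inside Equation \eqref{St}, the deterministic integrand $K(t,s)$ allows to split the resulting integral and collect the Lebesgue piece $\int_0^t K(t,s)\cdot \theta_s ds$ with the seasonality $\varphi(t)$, yielding Equation \eqref{newphi2} with $\varphi_\bQ$ as in Equation \eqref{phiQ}.

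For step (iii), I would argue both implications. For the forward direction ($\Rightarrow$), assume Equation \eqref{S-F} holds. Plugging Equation \eqref{newphi2} with $T_j$ in place of $t$ into the conditional expectation and using that $\varphi_\bQ(T_j)$ is deterministic and that the Wiener integral $\int_t^{T_j} K(T_j,s)\cdot dW^\bQ_s$ has zero conditional expectation under $\bQ$ (deterministic integrand, $W^\bQ$ is $\bQ$-Brownian), one obtains
\begin{equation*}
F(t,T_j) = \int_0^t K(T_j,s)\cdot dW^\bQ_s + \varphi_\bQ(T_j),
\end{equation*}
whose differential gives Equation \eqref{FQ}. Rewriting $dW^\bQ_t = dW_t + \theta_t dt$ then yields Equation \eqref{FP} with $\mu$ and $\sigma$ as in Equation \eqref{musigma}. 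For the converse ($\Leftarrow$), if $F(\cdot,T_j)$ satisfies Equation \eqref{FQ} with a deterministic, square-integrable integrand, then $F(\cdot,T_j)$ is a true $\bQ$-martingale on $[0,T_j]$; combined with the terminal condition $F(T_j,T_j) = S_{T_j}$, this yields Equation \eqref{S-F}, and hence $\bQ$ is an EMM so no arbitrage exists by the cited \cite[Lemma 12.1.6]{Oksendal}.

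The only delicate point I foresee is verifying that the candidate $\bQ$-martingales are actually \emph{true} martingales and not just local martingales, so that the conditional expectation identity $F(t,T_j) = \bE_\bQ[S_{T_j}|\cF_t]$ holds in the strong sense needed to invoke the absence-of-arbitrage criterion; but with deterministic, square-integrable kernels $K(T_j,\cdot)$, this reduces to the standard $L^2$-boundedness of Wiener integrals. Likewise, Novikov's condition for $Z$ is automatic because $\int_0^{\bar T}|\theta_s|^2 ds$ is a finite constant, so $\exp(\tfrac12 \int_0^{\bar T}|\theta_s|^2 ds) < \infty$ trivially. No further subtleties arise since the drift/diffusion identification in Equation \eqref{musigma} then follows by the uniqueness of the semimartingale decomposition.
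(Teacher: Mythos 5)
Your proof is correct and follows essentially the same route as the paper's: Girsanov for the change of measure, direct substitution of $dW_s = dW^\bQ_s - \theta_s\,ds$ to get the representation of $S$ under $\bQ$, conditional expectation of the Volterra representation (with the deterministic integrand killing the future increment) for the forward implication, and the true-martingale property of Wiener integrals with deterministic square-integrable kernels for the converse. Your explicit remarks on Novikov's condition and on true versus local martingales simply make precise points the paper leaves implicit.
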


 The proof of this proposition is quite standard and follows in part that of \cite[Theorem 12.1.8]{Oksendal}. For the readers' convenience, we present a version adapted to our case in the Appendix. 

\begin{remark}
Let us note that $\left\{F(t,T_j), t \in [0,T_j] \right\}$ as solution of \eqref{dFP} is a Markov process. Thus,  starting from the spot price being a possibly non-Markovian and non-semimartingale process, as is well known e.g. for fractional Brownian motion, we obtain that the forward prices with any fixed maturity $T_j$, $j= 1,\dots,n$, are Gaussian and Markov martingales under the measure $\bQ$. 
\end{remark}

\begin{remark} 
Equation \eqref{musigma} states that $\mu$ and $\sigma$, which in principle are defined only for their second argument equal to the  observed maturities $T_j$,  $j = 1,\ldots,m$, are instead  defined structurally as functions of the vector of Volterra kernels $K$ (and $\theta$), which instead are defined for all  times, also different from $T_j$. For this reason, if Equation  \eqref{musigma} is true, we can extend the definition of $\mu$ and $\sigma$ for $T$ possibly different from $T_j$,  $j = 1,\ldots,n$, as 
%
%
$$ \mu(t,T) = K(T,t) \cdot \theta_t \qquad \mbox{ and } \qquad \sigma(t,T) = K(T,t). $$
This allows us to write the seasonality function $\varphi_\bQ(T)$ under $\bQ$, by comparing Equation \eqref{phiQ} with \eqref{musigma}, as 
\begin{equation} \label{newphifinal}
\varphi_\bQ(T) := \varphi(T) - \int_0^T \mu(t,T) dt. 
\end{equation}
\end{remark}

\subsection{Completeness}\label{sec2.2} 

From here on, we assume that the market is arbitrage free, {thus, at least one EMM $\bQ$ exists}: by virtue of Proposition \ref{proposition1}, this implies that there exists a {deterministic} function $\theta:[0,\bar T] \to \bR^n$ 
such that $\int_0^{\bar T} |\theta|^2_s ds < \infty$
and that satisfies 
\begin{equation}\label{sysmu}
   \bar \mu(t) = \bar K(t) \theta_t \quad \text{and} \quad \bar \sigma(t) = \bar K(t) \quad \text{for a.a.} \quad  {t \leq T_1},
   \end{equation}
 where
$$ \bar \mu(t) := \left( \begin{array}{c} \mu(t,T_1) \\ \vdots \\ \mu(t,T_m) \end{array} \right), \quad 
\quad \bar \sigma(t) := \left( \begin{array}{c} \sigma(t,T_1) \\ \vdots \\ \sigma(t,T_m) \end{array} \right),  $$
and
\begin{equation}\label{matrixK}
{\bar K(t)}=(K_i(T_j,t))^{i=1,\ldots,n}_{j=1,\ldots,m} = \left( \begin{array}{ccc}
K_1(T_1,t)  & \ldots    & K_n(T_1,t) \\
\vdots      & \ddots    & \vdots \\
K_1(T_m,t)  & \ldots    & K_n(T_m,t) \\
\end{array} \right) . 
\end{equation} 

 At this point, we want to investigate the completeness of our market. We follow \cite{Oksendal} and say that our market is {\bf complete} if, for every possible payoff $Y \in L^\infty(\Omega,{\cal F}_T,\bP)$, we can find a real number  $y$  and a portfolio $\Delta = (\Delta^1,\ldots,\Delta^n)$ such that 
\begin{equation} \label{completeness}
 Y  =   y  + \sum_{j=1}^n \int_0^T \Delta^j_t dF(t,T_j) \quad  {a.s.}
\end{equation} 
{and such that the process
\begin{equation*}
 y  + \sum_{j=1}^n \int_0^s \Delta^j_t\cdot K(T_j,t) dW^{\bQ}_t, \qquad 0 \leq s \leq T, 
\end{equation*}
is a $\bQ$-martingale.}

\begin{remark}
If the market is complete, then the representation in Equation \eqref{completeness} usually can be extended to payoffs  $Y$  which are also non-bounded, see e.g. \blue{\cite[Exercise 12.3]{Oksendal}} for an extension to payoffs $ Y  \in L^2(\Omega,{\cal F}_T,\bP)$.
\end{remark}

 \begin{theorem}\label{complete1}
The market $(F(\cdot,T_j))_{j=1,\ldots,m}$, 
is complete if and only if $\bar K(t):\bR^n \to \bR^m$ 
has a left inverse \blue{$\bar K^{-1}_{\mathrm{left}}(t)$}, i.e. there exists a matrix $\bar K^{-1}_{\mathrm{left}}(t):\bR^m \to \bR^n$ such that
\begin{equation}\label{leftinv}
\blue{\bar K^{-1}_{\mathrm{left}}(t)\bar K(t)}= I_m \quad \text{for a.a.} \quad 
{{t \leq T}},
\end{equation}
 where $I_m$ is the identity matrix of order $m$.
\end{theorem}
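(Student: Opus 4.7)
The plan is to use the Brownian martingale representation theorem (MRT) under the equivalent martingale measure $\bQ$ to reduce completeness to a pointwise linear-algebra problem in $\bar K(t)$. By Proposition~\ref{proposition1}, under $\bQ$ we have $dF(t,T_j)=K(T_j,t)\cdot dW^\bQ_t$, so each forward price is a $\bQ$-martingale; since $W^\bQ$ is obtained from $W$ by a deterministic Girsanov shift, the filtration $(\cF_t)$ coincides with the natural filtration of $W^\bQ$. For any bounded claim $Y\in L^\infty(\Omega,\cF_T,\bP)\subset L^2(\bQ)$, the Brownian MRT yields a unique predictable $\phi\in L^2([0,T]\times\Omega;\bR^n)$ with $Y=\bE_\bQ[Y]+\int_0^T\phi_t\cdot dW^\bQ_t$. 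Any self-financing strategy $\Delta$ replicating $Y$ produces
$$Y=y+\sum_{j=1}^m\int_0^T\Delta^j_t\,dF(t,T_j)=y+\int_0^T\bar K(t)^\top\Delta_t\cdot dW^\bQ_t,$$
so by uniqueness of the MRT, $Y$ is replicable if and only if $y=\bE_\bQ[Y]$ and $\bar K(t)^\top\Delta_t=\phi_t$ for a.a.~$(t,\omega)$.

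For sufficiency, assume $\bar K(t)$ admits a measurable left inverse $\bar K^{-1}_{\mathrm{left}}(t)$. Transposing, $(\bar K^{-1}_{\mathrm{left}}(t))^\top$ is a right inverse of $\bar K(t)^\top$, so the explicit choice $\Delta_t:=(\bar K^{-1}_{\mathrm{left}}(t))^\top\phi_t$ solves the linear equation. Provided $\bar K^{-1}_{\mathrm{left}}$ is essentially bounded (for instance, taking the Moore--Penrose pseudoinverse together with the natural regularity of the kernels), $\Delta$ inherits predictability and $L^2$-integrability from $\phi$, so the resulting portfolio is self-financing, admissible, and its value process is a $\bQ$-martingale. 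For necessity, suppose the set $A:=\{t\in[0,T]:\ker\bar K(t)\neq\{0\}\}$ has positive Lebesgue measure; by a measurable selection theorem (Kuratowski--Ryll-Nardzewski) choose a bounded measurable $v:[0,T]\to\bR^n$ with $|v(t)|=1$ and $v(t)\in\ker\bar K(t)$ on $A$, and $v\equiv 0$ off $A$. The bounded claim $Y:=\tanh\bigl(\int_0^T v(t)\cdot dW^\bQ_t\bigr)$ then has, by the Clark--Ocone formula, MRT integrand $\phi^Y_t=\bE_\bQ[1-Y^2\mid\cF_t]\,v(t)$, a strictly positive multiple of $v(t)$ on $A$. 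Since $v(t)\perp\mathrm{range}(\bar K(t)^\top)$ there, no $\Delta_t$ can satisfy $\bar K(t)^\top\Delta_t=\phi^Y_t$, so $Y$ is not replicable, contradicting completeness.

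The main delicate step is the necessity direction: the measurable selection of $v$ and the Clark--Ocone computation have to be handled carefully, and one must check that the component along $v(t)$ is genuinely non-zero a.e.~on $A$. A cleaner alternative would be to invoke the second fundamental theorem of asset pricing, which in a Brownian filtration states that completeness is equivalent to uniqueness of the EMM; combined with Proposition~\ref{proposition1}, this uniqueness reduces to uniqueness of the Girsanov kernel $\theta$ solving the linear system~\eqref{sysmu}, which holds precisely when $\bar K(t)$ is injective, i.e.~admits a left inverse.
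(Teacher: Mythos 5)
The paper gives no argument of its own here: the proof is a one-line citation of \cite[Theorem 12.2.5]{Oksendal}. What you have written is a correct, self-contained reconstruction of that cited result, and it is the ``right'' proof: reduce replication, via the martingale representation theorem under $\bQ$ with respect to $W^\bQ$, to the pointwise linear system $\bar K(t)^\top \Delta_t=\phi_t$; solve it with the transposed left inverse for sufficiency; and for necessity exhibit a bounded non-replicable claim by integrating a measurable unit field $v(t)\in\ker\bar K(t)$ against $W^\bQ$ and composing with $\tanh$, the Clark--Ocone integrand $\bE_\bQ[\mathrm{sech}^2(G)\mid\cF_t]\,v(t)$ being a strictly positive multiple of a direction orthogonal to $\mathrm{range}(\bar K(t)^\top)$. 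Two small comments. First, the one gap you already flag yourself is real but minor: a left inverse need not be essentially bounded, so $\Delta_t=(\bar K^{-1}_{\mathrm{left}}(t))^\top\phi_t$ could in principle violate the pathwise $L^2$ requirement imposed on each $(\Delta^j_t\sigma(t,T_j))_t$ in the paper's definition of a self-financing portfolio, even though the aggregate integrand $\bar K(t)^\top\Delta_t=\phi_t$ is square-integrable and the value process is a genuine $\bQ$-martingale; taking the Moore--Penrose pseudoinverse, which is measurable since $\bar K$ is deterministic, is the standard repair and is exactly what the citation implicitly does. Second, for the same reason (everything is deterministic in $t$, with no $\omega$-dependence) your measurable selection is overkill: a normalized column of the projector $I_n-\bar K(t)^{+}\bar K(t)$ does the job without Kuratowski--Ryll-Nardzewski. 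Note also that your transposition step uses $\bar K^{-1}_{\mathrm{left}}(t)\bar K(t)=I_n$; the $I_m$ appearing in \eqref{leftinv} is dimensionally inconsistent for $m>n$ and must be read as $I_n$, consistently with the remark following the theorem that $\mathrm{rank}(\bar K(t))=n$ and $m\ge n$. Your closing alternative --- uniqueness of the EMM via uniqueness of $\theta$ in \eqref{sysmu} --- is the quickest route given Proposition \ref{proposition1}, but the second fundamental theorem it invokes is itself usually proved by the very representation argument you wrote out, so it buys brevity rather than a genuinely independent proof.
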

\begin{proof}
We refer to \cite[Theorem 12.2.5]{Oksendal}.
\end{proof}

 Let us note that, by virtue of the previous theorem, if the market is complete then the function $\theta_t$ satisfying \eqref{sysmu} is unique. Indeed, the only solution is given by $$\theta_t=\blue{\bar K_{\mathrm{left}}^{-1}(t)}\bar \mu(t).$$
Thus, we have a unique EMM $\bQ$.
\begin{remark}
    The existence of the left inverse for which equation \eqref{leftinv} holds is equivalent to the injectivity of $\bar K(t)$ for a.a. 
    {$t \leq T$}.
    Thus, \blue{$\mathrm{rank}(\bar K(t))=n$} for a.a. $t \leq T_1$, in particular, $m \geq n$.
\end{remark}
\begin{remark}
     We require, for convenience, the injectivity of $\bar K(t)$ 
     {``for a.a. $t \leq T$''.}
     However, we can note that completeness could still hold for 
     {$T > T_1$}
     as long as the remaining forward contracts span a diffusion matrix (that is $\bar K(t)$ with reduced number of rows) with rank equal to $n \leq m$.  Thus, in this case, even if we have assumed, from the beginning, that we are interested in trading only until 
     {$T\leq T_1$}
     in principle, we can trade even after some contracts have reached maturity, continuing to have a complete market.  
 \end{remark}
\begin{corollary}\label{complete2}
If $m=n$ the market is complete if and only if $\bar K(t)$ is invertible for a.a. 
{$t \leq T$}.
\end{corollary}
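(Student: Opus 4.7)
The plan is to derive this corollary directly from Theorem \ref{complete1}, exploiting the special structure that arises when the number $m$ of forward contracts coincides with the number $n$ of driving Brownian motions. In this case, the kernel matrix $\bar K(t)$ defined in Equation \eqref{matrixK} is a square $m \times m$ matrix, and the question of left-invertibility reduces to the question of ordinary invertibility.

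First, I would invoke Theorem \ref{complete1}: the market is complete if and only if there exists a matrix $\bar K^{-1}_{\mathrm{left}}(t)$ satisfying \eqref{leftinv} for almost all $t \leq T$. Then I would observe that when $m=n$, the linear map $\bar K(t):\bR^n \to \bR^m = \bR^n$ is an endomorphism of $\bR^n$, so by elementary linear algebra the following are equivalent: (i) $\bar K(t)$ has a left inverse; (ii) $\bar K(t)$ is injective; (iii) $\bar K(t)$ is surjective; (iv) $\bar K(t)$ is invertible, with two-sided inverse $\bar K^{-1}(t)$ coinciding with $\bar K^{-1}_{\mathrm{left}}(t)$. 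This equivalence rests on the rank-nullity theorem in finite-dimensional vector spaces and requires no further structural hypothesis.

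Combining these two observations yields the ``if and only if'' statement of the corollary. As a byproduct, I would note (consistently with the remark following Theorem \ref{complete1}) that in the square case the unique market price of risk simplifies to $\theta_t = \bar K^{-1}(t)\bar\mu(t)$ for a.a. $t \leq T$, giving a closed-form expression for the density of the unique EMM.

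The proof is essentially immediate and I do not anticipate any technical obstacle: the only subtlety to be careful about is the measurability of the inverse matrix $t \mapsto \bar K^{-1}(t)$, which follows automatically from Cramer's rule since the entries of $\bar K(t)$ are, by construction through the Volterra kernels $K_i(T_j,\cdot)$, measurable functions of $t$, and the determinant is nonzero almost everywhere by assumption.
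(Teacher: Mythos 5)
Your argument is correct and matches the paper's (implicit) reasoning: the corollary is stated as an immediate consequence of Theorem \ref{complete1}, with left-invertibility of the square matrix $\bar K(t)$ reducing to ordinary invertibility exactly as you describe. No gap here; the measurability remark via Cramer's rule is a harmless extra.
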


\begin{remark}
We underline that the forward prices $F(t,T_j)$, $j = 1,\ldots,m$, that we model here are only an approximation (or better, the building blocks) of the real contracts that are traded in electricity markets. Those last can be represented by the quantities $F(t,T_j,T_k)$, each one of these being the price at time $t$ of a forward contract\footnote{also called swap contracts in this framework by some authors} which delivers a fixed intensity of electricity over the period $[T_j,T_k]$, where $0\le t \le T_j < T_k \le \bar T$. More precisely, according to \cite{BPV} and to the no-arbitrage condition 
$$F(t,T_j,T_k)=\frac{1}{T_k-T_j}\int_{T_j}^{T_k} \bE_{\bQ}[S_T|\cF_t] dT = \frac{1}{T_k-T_j}\int_{T_j}^{T_k} F(t,T) dT, $$
valid when the settlement takes place at the end of the delivery period (this relation represents well also other kind of settlement rules, see Proposition 2.1 and Remark 1 in \cite{PSV} for details), its dynamics under $\bQ$ is
\begin{equation} \label{swap}
dF(t,T_j,T_k) = \bar K(t,T_j,T_k) \cdot dW^{\bQ}_t, 
\end{equation}
where the function $\bar K:\bR^3 \to \bR^n$ is defined as
\begin{equation} \label{barK}
\bar K(t,T_j,T_k)= \frac{1}{T_k-T_j} \int_{T_j}^{T_k} K(T,t) dT. 
\end{equation}
However, a common market practice when the delivery period $T_k - T_j$ is short (for example, for hourly or daily forward contracts) is to model not directly the traded quantities $F(t,T_j,T_k)$, but rather to model $F(t,\tilde T)$ with $\tilde T \in [T_j,T_k]$ as a proxy (for example, $\tilde T$ being the middle point of $[T_j,T_k]$). This market practice is justified also in our framework, as the next lemma shows.
\end{remark}
\begin{lemma} \label{2H}
If the dynamics of $F(t,\tilde T)$ and $F(t,T_j,T_k)$ are given respectively by Equations \eqref{FQ} and \eqref{swap}, with $\tilde T \in [T_j,T_k]$, and the kernel $K(T,t)$ is H\"older continuous in $T \in [0,\bar T]$ with exponent $\rho \in (0,1]$ uniformly in $t \in [0,\bar T]$, then 
$$ \mathrm{Var}[F(t,\tilde T) - F(t,T_j,T_k)] = O((T_k - T_j)^{2\rho}). $$
\end{lemma}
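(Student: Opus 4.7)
The plan is to exploit the fact that both $F(t,\tilde T)$ and $F(t,T_j,T_k)$ have deterministic initial values (they are conditional expectations at $t=0$) and then compute the variance directly via It\^o's isometry applied to their stochastic integral representations.

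First I would write, using Equations \eqref{FQ} and \eqref{swap},
$$ F(t,\tilde T) - F(t,T_j,T_k) = \bigl(F(0,\tilde T) - F(0,T_j,T_k)\bigr) + \int_0^t \bigl[K(\tilde T,s) - \bar K(s,T_j,T_k)\bigr]\cdot dW^{\bQ}_s. $$
The constant term drops out of the variance, so by It\^o's isometry under $\bQ$ (which is equivalent to $\bP$, hence changes of measure do not affect the order of magnitude of the variance; if desired one could also note that any drift coming from $\theta$ is deterministic and thus also irrelevant to $\mathrm{Var}$),
$$ \mathrm{Var}\bigl[F(t,\tilde T) - F(t,T_j,T_k)\bigr] = \int_0^t \bigl|K(\tilde T,s) - \bar K(s,T_j,T_k)\bigr|^2\, ds. $$

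Next I would rewrite $\bar K(s,T_j,T_k) = \frac{1}{T_k-T_j}\int_{T_j}^{T_k} K(T,s)\,dT$ as an average and insert it into the integrand, so that
$$ K(\tilde T,s) - \bar K(s,T_j,T_k) = \frac{1}{T_k-T_j}\int_{T_j}^{T_k}\bigl[K(\tilde T,s) - K(T,s)\bigr]\, dT. $$
Since $\tilde T, T \in [T_j,T_k]$, we have $|\tilde T - T| \leq T_k - T_j$, and the H\"older hypothesis on $K(\cdot,s)$ with exponent $\rho$, uniform in $s$, gives a constant $C > 0$ (independent of $s$) such that $|K(\tilde T,s) - K(T,s)| \leq C (T_k-T_j)^\rho$. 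Therefore $|K(\tilde T,s)-\bar K(s,T_j,T_k)| \leq C(T_k-T_j)^{\rho}$ for every $s \in [0,\bar T]$.

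Squaring and integrating in $s$ over $[0,t] \subseteq [0,\bar T]$ yields
$$ \mathrm{Var}\bigl[F(t,\tilde T) - F(t,T_j,T_k)\bigr] \leq C^2\, t\, (T_k-T_j)^{2\rho} \leq C^2\, \bar T\, (T_k-T_j)^{2\rho}, $$
which is exactly the claim. I do not foresee a real obstacle here: the only subtlety is remembering that the initial prices $F(0,\cdot)$ are deterministic, so they drop out of the variance, and that the drifts under $\bP$ (coming from $\mu$ in \eqref{FP}) are also deterministic by Proposition \ref{proposition1}, so working under $\bP$ or $\bQ$ is immaterial for the order of magnitude.
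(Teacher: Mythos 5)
Your proof is correct and follows essentially the same route as the paper: express the difference as a Wiener integral, apply the It\^o isometry, insert the average representation of $\bar K(s,T_j,T_k)$, and use the uniform H\"older bound with $|\tilde T - T| \le T_k - T_j$. The only cosmetic difference is that the paper evaluates $\int_{T_j}^{T_k}|\tilde T - T|^\rho\,dT$ exactly to get the constant $\frac{2C}{1+\rho}$, while you bound the integrand directly by $C(T_k-T_j)^\rho$, which yields a slightly coarser constant but the same order.
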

\begin{proof}
Since both $F(t,\tilde T)$ and $F(t,T_j,T_k)$ are defined by Wiener integrals, we have
$$ \mathrm{Var}[F(t,\tilde T) - F(t,T_j,T_k)] = \int_0^t |K(\tilde T,s) - \bar K(s,T_j,T_k)|^2 ds. $$
Assume now that $|K(T,s) - K(T',s)| \leq C |T - T'|^\rho$ for all $T,T' \in [0,\bar T]$ and for a suitable $C$. Then, from Equation \eqref{barK} we have that
\begin{eqnarray*}
\lefteqn{|K(\tilde T,s) - \bar K(s,T_j,T_k)| = \left| \frac{1}{T_k - T_j} \int_{T_j}^{T_k} (K(\tilde T,s) - K(T,s)) 
 {dT}
\right| \leq } \\
& \leq & \frac{1}{T_k - T_j} \int_{T_j}^{T_k} C|\tilde T - T|^\rho dT = \frac{C}{(1 + \rho)(T_k - T_j)} [(\tilde T - T_j)^{1 + \rho} + (T_k - \tilde T)^{1 + \rho}] \leq \\
& \leq & \frac{C}{(1 + \rho)(T_k - T_j)} [(T_k - T_j)^{1 + \rho} + (T_k - T_j)^{1 + \rho}] =  \frac{2 C}{1 + \rho} (T_k - T_j)^\rho.
\end{eqnarray*}
Thus,
\begin{eqnarray*}
\mathrm{Var}[F(t,\tilde T) - F(t,T_j,T_k)] & \leq & \int_0^t \left(\frac{2 C}{1 + \rho} (T_k - T_j)^\rho\right)^2 ds \leq \\
& \leq & \frac{4 C^2 \bar T}{(1 + \rho)^2} (T_k - T_j)^{2\rho},
\end{eqnarray*}
which yields the conclusion.
\end{proof}

\begin{remark}
While the case when the delivery period $T_k - T_j$ is short can be treated with the aid of Lemma \ref{2H}, this is not the case which concerns some forward contracts commonly met in electricity markets, where $T_k - T_j$ can be of order of one month, one trimester or one year. Instead, in the case of long delivery periods, when the approximation of Lemma 10 can possibly be not accurate enough, we can still use the results of Sections \ref{aac} and \ref{sec2.2}, but take care of substituting the kernels $K(T_j,t)$, relative to ``instantaneous'' forwards introduced in Equation \eqref{S-F}, with the kernels $\bar K(t,T_j,T_k)$ in Equation \eqref{barK}, relative to the forward contracts which are actually traded in the market. When  this is done, one should carry out the same kind of computation that we show in Sections \ref{GVP}, \ref{Sec:Optimal}, and \ref{Sec:Option} for the specific situation. However, the possible scenarios could be quite involved: in fact, the  ``classical'' assumption in academic literature is that these delivery period do not overlap (i.e. forward contracts are all of the same kind, e.g. monthly contracts); conversely, the typical situation in real market is that these contracts do have overlapping delivery periods (a discussion on why and how this happens can be found e.g. in \cite{LPV}), as the next example shows. In cases like this, one should also be careful to avoid what is defined in \cite{PSV} as  ``no overlapping arbitrage'' (NOA), i.e.~the possibility of building an arbitrage by considering forward contracts whose delivery period overlap (i.e., have a non-empty intersection). As this would deviate the focus from the main topic of this paper, we do not enter in further detail here.
\end{remark}
\begin{example}
Assume that we are at the beginning of May, 2024; then a typical situation is that the market quotes monthly forward prices for June, July and August 2024 (called respectively Jun/24, Jul/24 and Aug/24), for the 3rd and 4th trimester 2024 (Q3/24 and Q4/24) and for the calendar year 2025 (Cal-25), without further granularity until the end of May. This corresponds to setting $T_j$ as follows:
\begin{center}
\begin{tabular}{ll}
$T_j$   & actual date\\
\hline
$T_1$   & 01-06-2024  \\
$T_2$   & 01-07-2024  \\
$T_3$   & 01-08-2024  \\
$T_4$   & 01-09-2024  \\
$T_5$   & 01-10-2024  \\
$T_6$   & 01-01-2025  \\
$T_7$   & 01-01-2026  \\
\end{tabular}
\end{center}
and the quoted forward prices as 
\begin{center}
\begin{tabular}{rc}
market name & corresponding stochastic process\\
\hline
Jun/24  & $F(t,T_1,T_2)$\\
Jul/24  & $F(t,T_2,T_3)$\\
Aug/24  & $F(t,T_3,T_4)$\\
Q3/24   & $F(t,T_2,T_5)$\\
Q4/24   & $F(t,T_5,T_6)$\\
Cal-25  & $F(t,T_6,T_7)$\\
\end{tabular}
\end{center}
As we can see, the intervals $[T_j,T_k]$ overlap, as Jul/24 and Q3/24 have the same initial time and Aug/24 has the delivery period $[T_3,T_4]$ which is strictly contained in $[T_2,T_5]$, i.e. in the  delivery period of  Q3/24. 
\end{example}

 \section{ 
Gaussian Volterra Processes to model electricity spot prices} 
\label{GVP}

 In this section, we provide some examples of Gaussian Volterra processes which can be used to model electricity spot prices. We show analytically that, under conditions ensuring the absence of arbitrage, the completeness of the market is satisfied (see Theorem \ref{complete1}  and Corollary \ref{complete2}). While this is straightforward when the number of factors $n$ is 1, checking this for $n > 1$ involves proving that the matrix $\bar K(t)$ is non-singular for 
  a.a. $t \in [0,T]$. This is generally true, and quite straightforward to prove, when the kernels $K$ have different functional form from one another, resulting in factors which belong to different classes of Gaussian processes, 
  for example when we have a standard Ornstein-Uhlenbeck process and an  Ornstein-Uhlenbeck process driven by a  fractional Brownian motion  (fractional Ornstein-Uhlenbeck, fOU processes), see e.g. Section \ref{mixed}.
   Less trivial is the case when the Gaussian processes belong to the same class, for example when we have two fractional Brownian motion with different Hurst exponents. For this reason, in the following sections we check the most common cases that one could face, namely the case of Riemann-Liouville (RL) processes, and the case of a fBm, in both the cases with Hurst exponent $H > 1/2$ and $H < 1/2$. We also show how to formulate these results for Ornstein-Uhlenbeck (OU) processes driven by them, obtaining the original driving processes in the case when the mean-reversion speed is zero: for this reason, we first present how to treat OU processes driven by a generic Gaussian Volterra process, and then we will analyze the cases when the OU processes are driven by Gaussian-Volterra processes of the same kind as seen above, namely RL processes and  fBm with $H > 1/2$. We will analyze in detail the case when one has only $n = 2$ factors, as all the cases with $n > 2$ get more and more complicated 
 to handle. The only exception is when we have a generic number $n$ of RL fBm, in which case market completeness is equivalent to the determinant of a generalized Vandermonde matrix being nonzero, which we check to be true.   


\subsection{From Gaussian Volterra processes to Gaussian Volterra-driven Ornstein-Uhlenbeck processes}

Consider the Gaussian Volterra process
\begin{equation} \label{GV}
Z_t=\int_0^t K_Z(t,s) dW_s,
\end{equation}
where $W$ is a one dimensional Wiener process and $K_Z$ is a kernel such that $\int_0^t K^2_Z(t,s) ds < \infty$ for any $t>0$. Under this assumption $\bE Z_t^2 <\infty$ for any $t>0$. 
Consider the Langevin equation of the form
\begin{equation} \label{GVOU}
Y_t= \alpha\int_0^t Y_s ds + Z_t, \qquad t \geq 0, 
\end{equation}
 where we search for a solution $Y = \{Y_t, t\ge 0\}$ having Lebesgue-integrable sample paths, so that the integral $U_t := \int_0^t Y_s ds$ in the right-hand side is well defined.  Denoting $U_t=\int_0^t Y_s ds$, we can rewrite the above equation  
as 
\begin{equation}\label{U}
U_t' - \alpha U_t = Z_t,  
\end{equation}
 as the process $U$ has continuous sample paths which are a.e.   differentiable with respect to  the Lebesgue measure. Moreover, if $Z$ is continuous a.s., then equation \eqref{U} holds for all $t >0$.
  The condition of continuity of $Z$ is based on the standard Kolmogorov continuity theorem: 
we here follow \cite{MSS}, which specifically analyzes Gaussian Volterra processes.  Let $Z=\{Z_t, t \geq 0\}$ be a centered Gaussian process. If there exists 
 \blue{$C>0$} and $\delta >0$ such that 
\begin{equation}\label{cond1}
\bE |Z_t-Z_s|^2 \leq \blue{C}|t-s|^{\delta} \qquad \text{for all} \quad 0 \leq s \leq t \leq T, 
\end{equation}
then the process $Z$ has a modification that is continuous on $[0,T]$ 
which, moreover, satisfies H\"older continuity on $[0,T]$ of any order $0< \gamma < \frac{\delta}{2}$. 
 
\begin{remark}    
In the general case of a Gaussian Volterra process $Z$ defined as in Equation \eqref{GV}, the condition \eqref{cond1} is reduced to (see e.g.  \cite{MSS})
\begin{equation*}\label{cond2}
\int_0^s (K_Z(t,u)-K_Z(s,u))^2 du + \int_s^t K_Z^2(t,u) du \leq \blue{C}|t-s|^{\delta}.
\end{equation*}
\end{remark}
\begin{proposition}\label{prop17}
Let the condition \eqref{cond1} be fulfilled. Then Equation \eqref{U} has the unique continuous solution
\begin{equation} \label{Usol}
U_t=e^{\alpha t} \int_0^t e^{-\alpha s} Z_s ds, 
\end{equation}
and Equation \eqref{GVOU} has the unique continuous solution
\begin{equation}\label{Yt}
Y_t= U'_t= \alpha e^{\alpha t} \int_0^t e^{-\alpha s} Z_s ds + Z_t,
\end{equation}
which can be represented as a Gaussian-Volterra process as
\begin{align*}
Y_t & =  \int_0^t K_Y(t,u) dW_u,
\end{align*}
with 
 \begin{equation}\label{KY}
 K_Y(t,u)= \alpha \int_u^t e^{\alpha(t-s)}K_Z(s,u) ds + K_Z(t,u).    
 \end{equation} 
\end{proposition}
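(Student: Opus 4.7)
The plan is to proceed in three steps: solve the pathwise linear ODE for $U$, differentiate to recover $Y$, and then perform a stochastic Fubini exchange to extract the Volterra kernel $K_Y$.

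First, I would invoke the Kolmogorov continuity criterion recalled in the text: under hypothesis \eqref{cond1}, $Z$ admits a modification whose sample paths are H\"older continuous of any order $\gamma < \delta/2$ on $[0,T]$. In particular, $s \mapsto Z_s(\omega)$ is locally bounded, hence Lebesgue integrable, so that the integral $U_t = \int_0^t Y_s\,ds$ is well defined and Equation \eqref{U} holds pointwise in $t$ with $U_0 = 0$. Treating \eqref{U} as a pathwise first-order linear ODE, I would multiply by the integrating factor $e^{-\alpha t}$ to get $(e^{-\alpha t}U_t)' = e^{-\alpha t}Z_t$ and integrate from $0$ to $t$, obtaining \eqref{Usol}. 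Uniqueness is immediate since the difference of two solutions is annihilated by $\frac{d}{dt} - \alpha$ with zero initial datum and must therefore vanish.

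Differentiating \eqref{Usol} in $t$ yields \eqref{Yt}. Substituting $Y_s = U'_s$ back into the right-hand side of \eqref{GVOU} gives $\alpha \int_0^t Y_s\,ds + Z_t = \alpha U_t + Z_t = U'_t = Y_t$, which confirms that \eqref{Yt} solves \eqref{GVOU}; uniqueness of $Y$ is inherited from that of $U$. To obtain the Volterra representation, I would plug the kernel form $Z_s = \int_0^s K_Z(s,u)\,dW_u$ into the first term of \eqref{Yt} and swap the $ds$ integral with the Wiener integral via the stochastic Fubini theorem for Wiener integrals:
$$
\alpha e^{\alpha t}\int_0^t e^{-\alpha s}\!\int_0^s K_Z(s,u)\,dW_u\,ds
\;=\; \int_0^t \left( \alpha\int_u^t e^{\alpha(t-s)} K_Z(s,u)\,ds \right) dW_u.
$$
Adding the contribution $Z_t = \int_0^t K_Z(t,u)\,dW_u$ produced by the second term of \eqref{Yt} then identifies $K_Y$ as in \eqref{KY}.

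The main obstacle I anticipate is the rigorous justification of the stochastic Fubini step, which requires joint measurability of $(s,u) \mapsto e^{-\alpha s} K_Z(s,u)\mathbf{1}_{u \leq s}$ together with the integrability condition $\int_0^t\!\int_0^s e^{-2\alpha s}K_Z^2(s,u)\,du\,ds < \infty$. This integrability follows from \eqref{cond1}: since $\bE Z_s^2 = \int_0^s K_Z^2(s,u)\,du \leq C s^\delta$, the function $s \mapsto \int_0^s K_Z^2(s,u)\,du$ is integrable on $[0,t]$, and $e^{-2\alpha s}$ is bounded on that compact interval. A final Cauchy--Schwarz estimate, $\bigl|\int_u^t e^{\alpha(t-s)}K_Z(s,u)\,ds\bigr|^2 \leq (t-u)\int_u^t e^{2\alpha(t-s)}K_Z^2(s,u)\,ds$, combined with the standing assumption $\int_0^t K_Z^2(t,u)\,du < \infty$, then shows that $K_Y(t,\cdot)$ is itself square integrable on $[0,t]$, so that $Y$ is genuinely a Gaussian Volterra process with the kernel displayed in \eqref{KY}.
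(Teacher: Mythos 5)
Your proposal is correct and follows essentially the same route as the paper: solve \eqref{U} pathwise as a linear ODE using the continuous modification of $Z$ guaranteed by \eqref{cond1}, differentiate to get \eqref{Yt}, and apply the stochastic Fubini theorem to identify the kernel \eqref{KY}. The only cosmetic differences are that the paper establishes uniqueness of $Y$ directly via Gronwall's lemma rather than inheriting it from $U$, and cites Protter for the Fubini step where you spell out the integrability condition explicitly --- both are fine.
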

\begin{proof}
If the condition \eqref{cond1} holds, then Equation \eqref{U} is a linear ordinary differential equation in $U$ with non-homogeneous term $Z$, which is a continuous random function (here we consider the continuous modification of $Z$). Thus, the standard theory of ordinary differential equations gives the unique continuous solution as in Equation \eqref{Usol}, which is differentiable and has derivative given by Equation \eqref{Yt}. This is obviously solution of Equation \eqref{GVOU}, and it is very easy to see that this equation admits a unique solution. Take in fact two solutions $Y^1$ and $Y^2$: then we have that
$$ Y^1_t - Y^2_t = \alpha \int_0^t (Y^1_s - Y^2_s) ds $$
which, by the Gronwall lemma, has as consequence $Y^1_t - Y^2_t \equiv 0$ for all $t > 0$, thus, $Y^1 \equiv Y^2$, and the solution is unique. According to \cite[Chapter 4, Theorem 64]{Protter}, we can apply the stochastic Fubini theorem to the first term in the right-hand side of \eqref{Yt} and get that 
$$ Y_t = \alpha e^{\alpha t} \int_0^t e^{-\alpha s} \int_0^s K_Z(s,u) dW_u ds + \int_0^t K_Z(t,u) dW_u= 
 \int_0^t K_Y(t,u) dW_u, $$
where $K_Y$ is defined as in Equation \eqref{KY}: this concludes the proof.
\end{proof}

%
%

The advantage of \eqref{KY} is that we immediately get it avoiding any other operator or transformation, fractional or not. However, if we want to reduce \eqref{KY} to one term, then some additional assumptions are needed.
 
\begin{lemma} \label{lemmaKY}
If $K_Z$ is such that $K_Z(u,u) = 0$ and is differentiable in the first variable, with $\frac{\partial K_Z}{\partial s}$  Lebesgue integrable in $s$, then
\begin{equation}\label{diff}
K_Y(t,u) = \int_u^t e^{\alpha (t-s)} \frac{\partial K_Z(s,u)}{\partial s}  ds    
\end{equation}  
\end{lemma}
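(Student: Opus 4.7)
The plan is to start from the already-established representation
\[
K_Y(t,u) = \alpha \int_u^t e^{\alpha(t-s)} K_Z(s,u)\, ds + K_Z(t,u)
\]
from Proposition \ref{prop17}, and show it equals the candidate expression \eqref{diff} by performing integration by parts on the latter. This is the natural move because the candidate expression involves $\partial K_Z/\partial s$ while the Proposition \ref{prop17} formula involves $K_Z$ itself, and the difference between the two representations is precisely the boundary term that appears when differentiating a product of $e^{\alpha(t-s)}$ with $K_Z(s,u)$.

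Concretely, I would take $f(s) = e^{\alpha(t-s)}$ and $g(s) = K_Z(s,u)$, so that $f'(s) = -\alpha e^{\alpha(t-s)}$ and $g'(s) = \partial K_Z(s,u)/\partial s$ is, by hypothesis, Lebesgue integrable in $s \in [u,t]$. Then
\[
\int_u^t e^{\alpha(t-s)} \frac{\partial K_Z(s,u)}{\partial s}\, ds
= \Bigl[ e^{\alpha(t-s)} K_Z(s,u) \Bigr]_{s=u}^{s=t} + \alpha \int_u^t e^{\alpha(t-s)} K_Z(s,u)\, ds.
\]
The boundary term at $s=t$ equals $K_Z(t,u)$ since $e^{0}=1$, while the boundary term at $s=u$ equals $e^{\alpha(t-u)} K_Z(u,u)$, which vanishes by the assumption $K_Z(u,u)=0$. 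Substituting yields exactly the right-hand side of the Proposition \ref{prop17} formula for $K_Y(t,u)$, which closes the argument.

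The step that requires a small amount of care, rather than the integration by parts itself, is justifying the use of the classical integration-by-parts formula in this setting: one needs $K_Z(\cdot,u)$ to be absolutely continuous on $[u,t]$, which follows from the hypothesis that $K_Z$ is differentiable in the first variable with Lebesgue-integrable derivative in $s$. With this, no further assumption (such as continuity of $\partial K_Z/\partial s$) is needed, and the identity holds pointwise in $(t,u)$ with $u \le t$. I do not expect any genuine obstacle here; the lemma is essentially a rewriting of \eqref{KY} exploiting the boundary condition $K_Z(u,u)=0$, and the proof will be short.
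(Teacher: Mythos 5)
Your proof is correct and is essentially the paper's own argument: both reduce the claim to a single integration by parts linking \eqref{KY} and \eqref{diff}, with the boundary term at $s=u$ killed by $K_Z(u,u)=0$ and the one at $s=t$ cancelling (or producing) the $K_Z(t,u)$ term. The only difference is that you integrate by parts starting from \eqref{diff} while the paper starts from \eqref{KY}, which is the same computation read in the opposite direction; your added remark on absolute continuity of $K_Z(\cdot,u)$ is a fair (and harmless) sharpening of the hypotheses.
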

\begin{proof}
Under the previous assumptions, we can integrate in \eqref{KY} by parts and get that
\begin{align*}
K_Y(t,u)& = e^{\alpha t} \left( - e^{-\alpha s} K_Z(s,u) {\vert_u^t} + \int_u^t e^{-\alpha s} \frac{\partial K_Z(s,u)}{\partial s} \right) ds +K_Z(t,u)\\
& = - K_Z(t,u) + e^{\alpha (t-u)} K_Z(u,u) + \int_u^t e^{\alpha (t-s)} \frac{\partial K_Z(s,u)}{\partial s}  ds + K_Z(t,u)  
\end{align*}
and the result follows.
\end{proof}
\begin{remark} If to relate \eqref{KY} and \eqref{diff} to fractional Brownian motion and fractional Ornstein-Uhlenbeck process, then  \eqref{KY} is valid for any $H\in(0,1)$, while \eqref{diff} is valid only for  $H\in(1/2,1)$. More detail is given below. More general kernels are studied in \cite{MSS}.
\end{remark}

Now, let us consider some examples of Gaussian-Volterra processes. 

\subsection{Riemann-Liouville processes}

The Riemann-Liouville (RL) process 
is defined as 
\begin{equation} \label{RLfBm} 
   U^{H}_t =    c_{H,U}   \int_0^t (t-s)^{H-\frac{1}{2}} d W_s 
\end{equation}
where $W$ is a standard 1-dimensional Brownian motion, and normalizing factor $c_{H,U}$, by analogy with fractional integrals, is chosen as  $c_{H,U} := \frac{1}{\Gamma(H+\frac{1}{2})}$.\\ So, the RL process  
is a Gaussian-Volterra process having the  kernel $K_{ U^H}(t,s) :=  c_{H,U}(t - s)^{H-\frac{1}{2}}$. This kernel is square-integrable in $[0,t]$ for any $H > 0$, and it is well known that $U^H$ has continuous sample paths for all $H > 0$. However, the conditions that we gave to have an OU process driven by this process with the good properties seen in Section 5.1 are valid only when $H > 1/2$. In fact, we have that $ K_{U^H}(s,s)  = 0$ if and only if $H > 1/2$; in this case, 
$$ \frac{\partial K_{ U^H}(t,s)}{\partial t} = c_{H,U} \left(H - \frac12 \right) (t - s)^{H - \frac32} $$
which is integrable in $t$ for $H > 1/2$. For this reason, we will analyze OU processes driven by RL processes only in the case $H > 1/2$, while we have a much more general result for RL processes.  We underline that the increments of RL processes are not stationary \cite{JC1}.  

\subsubsection{Riemann-Liouville processes, $n$ factors}

As announced, in the case of RL processes, we can prove that $n$ different factors, each one with a different Hurst exponent, make the matrix $\bar K(t)$ nonsingular for all $t > 0$. 
Consider $n$ independent Riemann-Liouville processes represented as in Equation \eqref{RLfBm},  driven by $n$ independent standard 1-dimensional Brownian motions. We assume that the Hurst exponent $H_i$, $i = 1,\ldots,n$, are all different; thus, without loss of generality, we can assume that 
$0 < H_1 < H_2 < \ldots < H_n < 1$.  
%
%

%
 From Corollary \ref{complete2}, a 
sufficient and necessary condition for the completeness of the market in this case is 

\begin{equation}\label{RlnMatrix}
\begin{vmatrix}
  {c_{H_1,U}}  (T_1-t)^{H_1-\frac{1}{2}} & \ldots &   {c_{H_n,U}}  (T_1-t)^{H_n-\frac{1}{2}}   \\ 
\vdots & \vdots & \vdots \\ 
{c_{H_1,U}}  (T_n-t)^{H_1-\frac{1}{2}}  & \ldots &   {c_{H_n,U}}  (T_n-t)^{H_n-\frac{1}{2}} 
\end{vmatrix}
\neq 0
\end{equation}
 for any $0 \leq t \leq T$, where $T < T_1 < \ldots < T_n$.  
Let us note that, as well as in the subsequent subsections, in order to assess whether the matrix \eqref{matrixK} is invertible we can neglect the normalizing constants 
appearing in each line.  
 Without loss of generality, we assume that $0 < H_1 < H_2 < \ldots < H_n < 1$. Then, by letting $\alpha_i := H_i - \frac12$ and $x_j := T_j - t$, for $i,j = 1,\ldots,n$, the following result is instrumental in proving the completeness.  

\begin{theorem}\label{Kmatrix}
Let us consider the determinant of the form

\begin{equation}\label{xiMatrix}
\Delta_n(x_1, \ldots, x_n)= 
\begin{vmatrix}
x_1^{\alpha_1} & \ldots & x_n^{\alpha_1}  \\ 
x_1^{\alpha_2} & \ldots & x_n^{\alpha_2} \\
\vdots & \vdots & \vdots \\ 
x_1^{\alpha_n} & \ldots & x_n^{\alpha_n}
\end{vmatrix}.
\end{equation}
 Then, for any $\alpha_i \in \mathbb R$ such that $\alpha_1 < \ldots < \alpha_n$ and $0 < x_1 \ldots < x_n,$ we have $\Delta_n >0$.
\end{theorem}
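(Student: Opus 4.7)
The plan is to prove this by induction on $n$, exploiting the fact that viewed as a function of one variable $x_n$, the determinant is a generalized polynomial with real (not necessarily integer) exponents, and that such functions obey a Descartes-type bound on the number of positive zeros.

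For the base case $n=1$ we have $\Delta_1(x_1)=x_1^{\alpha_1}>0$. For the inductive step, assume the statement for $n-1$ and expand $\Delta_n(x_1,\ldots,x_{n-1},x_n)$ along the last column to obtain
\begin{equation*}
g(x)\;:=\;\Delta_n(x_1,\ldots,x_{n-1},x)\;=\;\sum_{i=1}^{n} a_i\,x^{\alpha_i},
\end{equation*}
where the coefficients $a_i=(-1)^{i+n}M_{i,n}$ are the signed $(n-1)\times(n-1)$ minors obtained by deleting row $i$ and the last column, and in particular $a_n=\Delta_{n-1}(x_1,\ldots,x_{n-1})>0$ by the inductive hypothesis. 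Substituting $x=x_j$ for $j=1,\ldots,n-1$ into the original determinant produces a matrix with two equal columns, hence $g(x_1)=g(x_2)=\cdots=g(x_{n-1})=0$, giving $n-1$ positive zeros of $g$.

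The key input is the following generalized Descartes rule (Pólya's theorem): any nonzero real linear combination $\sum_{i=1}^{n}a_i x^{\alpha_i}$ with distinct real exponents $\alpha_1<\cdots<\alpha_n$ has at most $n-1$ zeros in $(0,\infty)$. I would give a short inductive proof of this inline: factoring out $x^{\alpha_1}$ leaves a function of the form $a_1+\sum_{i=2}^{n}a_i x^{\alpha_i-\alpha_1}$; differentiating applies Rolle's theorem and strictly reduces the number of terms, so the bound follows by induction on the number of exponents. Since $a_n>0$, $g$ is not identically zero, and its number of positive zeros is exactly $n-1$, precisely $x_1,\ldots,x_{n-1}$, all simple.

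It remains to determine the sign of $g$ for $x>x_{n-1}$. Because $a_n>0$ and $\alpha_n>\alpha_i$ for all $i<n$, we have $g(x)\sim a_n x^{\alpha_n}\to+\infty$ as $x\to\infty$, so $g$ is strictly positive on $(x_{n-1},\infty)$ (having exhausted all its positive zeros at $x_1,\ldots,x_{n-1}$). Applying this at the prescribed value $x=x_n>x_{n-1}$ yields $\Delta_n(x_1,\ldots,x_n)=g(x_n)>0$, completing the induction. The main obstacle is simply justifying the generalized Descartes bound, which is classical but not elementary Vandermonde; everything else is bookkeeping via column expansion.
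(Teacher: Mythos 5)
Your proof is correct, but it follows a genuinely different route from the one in the paper. You expand the determinant along the last column, observe that the resulting generalized polynomial $g(x)=\sum_{i=1}^{n}a_i x^{\alpha_i}$ has leading coefficient $a_n=\Delta_{n-1}(x_1,\ldots,x_{n-1})>0$ by induction and already vanishes at the $n-1$ points $x_1,\ldots,x_{n-1}$, and then invoke the Descartes--P\'olya bound (itself proved by Rolle's theorem after factoring out $x^{\alpha_1}$) to conclude that $g$ has no further positive zeros and hence, being asymptotic to $a_n x^{\alpha_n}\to+\infty$, is positive at $x_n$. The paper instead normalizes the determinant by factoring $x_i^{\alpha_1}$ out of each column, subtracts the first column, and runs the induction through the mixed partial derivative $\partial^{n-1}\tilde\Delta_n/\partial x_2\cdots\partial x_n$, which reduces (up to the positive factor $\prod_{i\ge 2}\beta_i/x_i$) to a lower-order determinant of the same type; positivity is then recovered by integrating back one variable at a time, using that each successive partial derivative vanishes when the corresponding $x_j$ collapses onto $x_1$. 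Both arguments are inductive and both ultimately rest on a Rolle-type mechanism, but yours isolates the zero-counting lemma for exponential sums as a clean, reusable black box (the classical Chebyshev-system argument for generalized Vandermonde positivity), at the cost of having to state and justify that auxiliary lemma; the paper's argument is self-contained and works directly on the determinant, at the cost of a slightly heavier bookkeeping with iterated partial derivatives. One cosmetic remark: your parenthetical claim that the zeros $x_1,\ldots,x_{n-1}$ are ``all simple'' is neither needed nor justified by the bound as you state it (which counts zeros without multiplicity); what you actually use is only that there is no zero in $(x_{n-1},\infty)$, and that part of the argument is sound.
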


\blue{Let us note that the matrix in \eqref{xiMatrix} is the transpose of the matrix in \eqref{RlnMatrix}: clearly we can use the above result for our case, because the determinant is the same for a matrix and its transpose.
 Moreover,} let us note that the matrix in \eqref{xiMatrix} has the form of the so-called unsigned exponential Vandermonde matrix. The result on the positiveness of the determinant for the signed exponential Vandermonde matrix can be found in \cite{Robbin}. However, the proof for the case of the unsigned exponential matrix is not explicitly provided there, thus we report our proof in the Appendix \ref{appendix} for the readers' convenience. 

In our case, as $x_j = T_j - t$, we have $0 < T_1 - T \leq x_1 < \ldots < x_n \leq T_n$ for all $t \in [0,T]$,  
so we can see that the conditions  of Theorem \ref{Kmatrix} are satisfied. Thus, also the determinant of the matrix of the processes' kernels $\bar K(t)$ is positive for all $t \in [0,T]$. Therefore, in the case the electricity spot price is the sum of $n$ RL processes, we can assert that our market is complete, when the number $m$ of forward contracts is equal to $n$.
 Let us note that the completeness is still valid in the case $m > n$, by Theorem \ref{complete1}: indeed, thanks to Theorem \ref{Kmatrix}, we have  a minor of order $n$ different from zero, so the rank of the matrix is $n$

\subsubsection{ Ornstein-Uhlenbeck processes driven by RL processes, $H > 1/2$} 
 By Lemma \ref{lemmaKY}, an OU process with mean-reversion speed $\alpha$ driven by a RL fBm with Hurst exponent $H > \frac12$ is a Gaussian-Volterra process, with kernel given by
$$ K_Y(t,s) =  {c_{H,U}} \int_s^t e^{\alpha(t - u)} \left(H - \frac12\right) (u - s)^{H-\frac32} du. $$
\begin{proposition}
Consider two OU processes with the same mean-reversion speed $\alpha$, driven by two independent RL fBm with $\frac{1}{2} < H_1 < H_2$, and let $0 <T_1 < T_2$. In this case 
\blue{$$
\begin{vmatrix}
c_{H_1,RL}\int_s^{T_1} e^{\alpha(T_1-u)} (u - s)^{H_1-\frac32} du &c_{H_2,RL}\int_s^{T_1} e^{\alpha(T_1-u)} (u - s)^{H_2-\frac32} du  \; \\
c_{H_1,RL}\int_s^{T_2} e^{\alpha(T_2-u)} (u - s)^{H_1-\frac32} du  &  c_{H_2,RL}\int_s^{T_2} e^{\alpha(T_2-u)} (u - s)^{H_2-\frac32} du 
\end{vmatrix} \neq 0.$$}
\end{proposition}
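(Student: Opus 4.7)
The plan is to reduce the determinant to a two-variable problem on a family of one-variable functions parametrised by the Hurst exponent, and then establish the non-vanishing by a monotonicity argument. Here is how I would proceed.

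First, I would pull out the positive constants $c_{H_1,RL}, c_{H_2,RL}$ (they don't affect whether the determinant vanishes) and then perform the substitution $v = u - s$ in each entry, so that each integral becomes
\begin{equation*}
\int_s^{T_i} e^{\alpha(T_i-u)}(u-s)^{H_j - 3/2}\,du \;=\; e^{\alpha(T_i - s)} \int_0^{T_i - s} e^{-\alpha v} v^{H_j - 3/2}\,dv.
\end{equation*}
Setting $\tau_i := T_i - s$ (so that $0 < \tau_1 < \tau_2$) and defining
\begin{equation*}
F_j(\tau) := \int_0^\tau e^{-\alpha v} v^{H_j - 3/2}\,dv, \qquad j=1,2,
\end{equation*}
I can factor $e^{\alpha \tau_1}$ out of the first row and $e^{\alpha \tau_2}$ out of the second, reducing the problem to showing
\begin{equation*}
F_1(\tau_1) F_2(\tau_2) - F_2(\tau_1) F_1(\tau_2) \neq 0.
\end{equation*}
Note that $F_j(\tau) > 0$ for every $\tau > 0$ and every $H_j > 1/2$, since the integrand is strictly positive on $(0,\tau)$ (and integrable near $0$ precisely because $H_j > 1/2$). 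Hence the displayed quantity vanishes if and only if $F_2/F_1$ takes the same value at $\tau_1$ and $\tau_2$.

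The key step is therefore to show that $\tau \mapsto F_2(\tau)/F_1(\tau)$ is \emph{strictly monotonic} on $(0,\infty)$. Its derivative has the same sign as $F_2'(\tau) F_1(\tau) - F_2(\tau) F_1'(\tau)$, and using $F_j'(\tau) = e^{-\alpha \tau} \tau^{H_j - 3/2}$ together with the definition of $F_j$, this equals
\begin{equation*}
e^{-\alpha \tau} \int_0^\tau e^{-\alpha v}\, v^{H_1 - 3/2}\, \tau^{H_1 - 3/2} \bigl(\tau^{H_2 - H_1} - v^{H_2 - H_1}\bigr)\,dv.
\end{equation*}
Since $H_2 - H_1 > 0$, the factor $\tau^{H_2 - H_1} - v^{H_2 - H_1}$ is strictly positive for $v \in (0,\tau)$, so the whole integrand is positive and the derivative is strictly positive. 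Consequently $F_2/F_1$ is strictly increasing, and in particular $F_2(\tau_1)/F_1(\tau_1) \neq F_2(\tau_2)/F_1(\tau_2)$, which yields the claim (in fact the determinant is strictly positive).

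The main obstacle in this approach is spotting the monotonicity trick after the substitution; the substitution and factorisation themselves are routine, but without noticing that the determinant can be rewritten as a Wronskian-type quantity of two explicit functions of a single variable, one might try to handle the two parameters $H_1,H_2$ simultaneously and end up with a more cumbersome calculation. The monotonicity argument crucially uses only that $H_2 > H_1$, so it extends naturally to any two distinct Hurst exponents in $(1/2,1)$.
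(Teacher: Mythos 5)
Your proof is correct and follows essentially the same strategy as the paper's: strip the positive constants and exponential prefactors, then show the reduced determinant is monotone in the second maturity so that it can only vanish on the diagonal. The only difference is in the key step — the paper fixes $T_1$, differentiates the determinant in $T_2$, and bounds the resulting ratio of integrals via the Cauchy mean value theorem, whereas you write the Wronskian $F_2'F_1 - F_2F_1'$ as a single integral with manifestly positive integrand; the two computations establish the same underlying inequality $\tau^{H_2-H_1} > F_2(\tau)/F_1(\tau)$, and yours is, if anything, slightly more direct.
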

\begin{proof}
We can prove, equivalently, that
$$   \Delta_{RL}(T_1,T_2) \neq 0,$$
where
\blue{$$\Delta_{RL}(T_1,T_2)=
\begin{vmatrix}
 \int_s^{T_1} e^{-\alpha u} (u - s)^{H_1-\frac32} du &  \int_s^{T_1} e^{-\alpha u} (u - s)^{H_2-\frac32} du \; \\
  \int_s^{T_2} e^{- \alpha u} (u - s)^{H_1-\frac32} du & \int_s^{T_2} e^{-\alpha u} (u - s)^{H_2-\frac32} du 
\end{vmatrix}.$$}
Obviously, $\Delta_{RL}(T_1,T_1)=0$, and 
\blue{$$\frac{\partial \Delta_{RL}(T_1,T_2)}{\partial T_2}=
e^{-\alpha T_2}\begin{vmatrix}
 \int_s^{T_1} e^{-\alpha u} (u - s)^{H_1-\frac32} du & \int_s^{T_1} e^{-\alpha u} (u - s)^{H_2-\frac32} du  \; \\
  (T_2 - s)^{H_1-\frac32}  & (T_2 - s)^{H_2-\frac32}   
\end{vmatrix} $$}
%
Consider now the difference
$$ \frac{(T_2 - s)^{H_2-\frac32}}{(T_2 - s)^{H_1-\frac32}} - \frac{\int_s^{T_1} e^{-\alpha u} (u - s)^{H_2-\frac32} du}{\int_s^{T_1} e^{-\alpha u} (u - s)^{H_1-\frac32} du} = 
\frac{(T_2 - s)^{H_2-\frac32}}{(T_2 - s)^{H_1-\frac32}} - \frac{e^{-\alpha \xi} (\xi - s)^{H_2-\frac32}}{e^{-\alpha \xi} (\xi - s)^{H_1-\frac32}} $$
for a suitable $\xi \in (s,T_1)$ to be chosen due to the Cauchy theorem about the ratio of increments of differentiable functions. Then we have 
$$ \frac{(\xi - s)^{H_2-\frac32}}{(\xi - s)^{H_1-\frac32}} = 
(\xi-s)^{H_2-H_1} < (T_2-s)^{H_2-H_1} = \frac{(T_2 - s)^{H_2-\frac32}}{(T_2 - s)^{H_1-\frac32}}, $$
which implies that $\frac{\partial \Delta_{RL}(T_1,T_2)}{\partial T_2} > 0$: this, joint to $\Delta_{RL}(T_1,T_1)=0$, implies that $\Delta_{RL}(T_1,T_2) > 0$ for all $T_2 > T_1$. 
\end{proof}

\subsection{ Ornstein-Uhlenbeck processes driven by fBMs, $H > 1/2$} 
Consider now the case of fBM $B^H$ with Hurst index $H \in (\frac{1}{2},1)$. 
Then $B^H$ admits the compact interval representation of the form
$$B^H_t=c_H \int_0^t s^{\frac{1}{2}-H} \int_s^t \varphi_H(u,s) du \; dW_s,$$
where $W$ is a Wiener process,  $$c_{H} := \left(H-\frac{1}{2} \right)\left( \frac{2 H \; \Gamma(\frac{3}{2}-H)}{\Gamma(H + \frac{1}{2})\Gamma(2-2H)}\right)^{\frac{1}{2}}$$  and 
$$ \varphi_H(u,s) := u^{H-\frac{1}{2}}(u-s)^{H-\frac{3}{2}}. $$
In other words, in this case the kernel is 
$$ K_{ {B^H}}(t,s) = c_H s^{\frac{1}{2}-H} \int_s^t \varphi_H(u,s) du $$
For $H > 1/2$, this kernel is square-integrable in $[0,t]$ and is such that $B^H$ has continuous (even H\"{o}lder up to order $H$) sample paths. Moreover, the conditions that we gave to have an OU process driven by this process with the good properties seen in Section 5.1 are valid. In fact, following Lemma \ref{lemmaKY}, we have that 
$ {K_{{B^H}}(s,s)} = 0$ and
$$ \frac{\partial K_{ {B^H}}(t,s)}{\partial t} = c_H s^{\frac{1}{2}-H} t^{H-\frac{1}{2}}(t-s)^{H-\frac{3}{2}} = c_H s^{\frac{1}{2}-H} \varphi_H(t,s) $$
which is integrable in $t$ for $H > 1/2$. Thus, we can also analyze OU processes driven by fBm. After Lemma \ref{lemmaKY}, an OU process with mean-reversion speed $\alpha$ lead by a fBm is a Gaussian-Volterra process with kernel
$$ K_Y(t,s) := c_H s^{\frac{1}{2}-H} \int_s^t e^{\alpha(t-u)} \varphi_H(u,s) du. $$
\begin{proposition}
Let us consider two {fOU} process with the same mean-reversion speed $\alpha$ and $\frac{1}{2} < H_1 < H_2$, and consider also $0 <T_1 < T_2$. Then, we have
\blue{$$
\begin{vmatrix}
 c_{H_1}s^{\frac{1}{2}-H_1} \int_s^{T_1} e^{\alpha(T_1-u)} \varphi_{H_1}(u,s) du &   c_{H_2}s^{\frac{1}{2}-H_2} \int_s^{T_1} e^{\alpha(T_1-u)} \varphi_{H_2}(u,s) du \; \\
 c_{H_1}s^{\frac{1}{2}-H_1} \int_s^{T_2} e^{\alpha(T_2-u)} \varphi_{H_1}(u,s) du &  c_{H_2}s^{\frac{1}{2}-H_2} \int_s^{T_2} e^{\alpha(T_2-u)} \varphi_{H_2}(u,s) du 
\end{vmatrix} \neq 0.$$}
\end{proposition}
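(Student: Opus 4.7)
I plan to mirror the argument used in the preceding proposition for the two-factor RL case. After factoring out the nonzero constants $c_{H_i}s^{1/2 - H_i}$ from each column and the factors $e^{\alpha T_j}$ from each row, showing the determinant is nonzero reduces to proving
\[
\Delta_{fOU}(T_1, T_2) :=
\begin{vmatrix}
\int_s^{T_1} e^{-\alpha u} \varphi_{H_1}(u,s)\, du & \int_s^{T_1} e^{-\alpha u} \varphi_{H_2}(u,s)\, du \\
\int_s^{T_2} e^{-\alpha u} \varphi_{H_1}(u,s)\, du & \int_s^{T_2} e^{-\alpha u} \varphi_{H_2}(u,s)\, du
\end{vmatrix}
\neq 0.
\]
All four integrals are well-defined, since $\varphi_{H_i}(u,s) = u^{H_i-1/2}(u-s)^{H_i-3/2}$ is integrable in $u$ near $s$ precisely when $H_i > 1/2$, which is assumed.

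The key steps are the following. First, observe that $\Delta_{fOU}(T_1, T_1) = 0$ because the two rows coincide. Second, differentiate under the integral with respect to $T_2$ to obtain
\[
\frac{\partial \Delta_{fOU}}{\partial T_2}(T_1, T_2) = e^{-\alpha T_2}\! \left[ \varphi_{H_2}(T_2,s) \int_s^{T_1}\! e^{-\alpha u} \varphi_{H_1}(u,s)\, du - \varphi_{H_1}(T_2,s) \int_s^{T_1}\! e^{-\alpha u} \varphi_{H_2}(u,s)\, du \right]\!.
\]
Third, after dividing by the strictly positive quantity $\varphi_{H_1}(T_2,s) \int_s^{T_1} e^{-\alpha u} \varphi_{H_1}(u,s)\, du$, reduce positivity of $\partial \Delta_{fOU}/\partial T_2$ to the strict inequality
\[
\frac{\int_s^{T_1} e^{-\alpha u} \varphi_{H_2}(u,s)\, du}{\int_s^{T_1} e^{-\alpha u} \varphi_{H_1}(u,s)\, du} \;<\; \frac{\varphi_{H_2}(T_2,s)}{\varphi_{H_1}(T_2,s)}.
\]
Once this is established, combining $\Delta_{fOU}(T_1,T_1) = 0$ with $\partial_{T_2}\Delta_{fOU} > 0$ immediately gives $\Delta_{fOU}(T_1, T_2) > 0$ for every $T_2 > T_1$.

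To prove the strict inequality I would invoke the Cauchy mean-value theorem applied to the pair of functions $T \mapsto \int_s^T e^{-\alpha u} \varphi_{H_i}(u,s)\, du$ for $i = 1, 2$, both of which vanish at $T = s$ and have strictly positive derivatives on $(s, T_1)$; this yields some $\xi \in (s, T_1)$ with
\[
\frac{\int_s^{T_1} e^{-\alpha u} \varphi_{H_2}(u,s)\, du}{\int_s^{T_1} e^{-\alpha u} \varphi_{H_1}(u,s)\, du} = \frac{\varphi_{H_2}(\xi,s)}{\varphi_{H_1}(\xi,s)} = \bigl(\xi(\xi - s)\bigr)^{H_2 - H_1}.
\]
The main (mild) obstacle is that, unlike in the RL case where the analogous ratio was simply $(u-s)^{H_2-H_1}$, here the ratio $\varphi_{H_2}(u,s)/\varphi_{H_1}(u,s) = (u(u-s))^{H_2-H_1}$ mixes a power of $u$ with a power of $u-s$. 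Nonetheless, the map $u \mapsto u(u - s)$ is strictly increasing on $(s, \infty)$, so combining $\xi < T_1 < T_2$ with $H_2 - H_1 > 0$ gives $(\xi(\xi-s))^{H_2-H_1} < (T_2(T_2-s))^{H_2-H_1} = \varphi_{H_2}(T_2,s)/\varphi_{H_1}(T_2,s)$, which is exactly the inequality needed, and the proof is complete.
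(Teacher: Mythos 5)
Your proposal is correct and follows essentially the same route as the paper's own proof: reduce to the determinant of the exponentially weighted integrals, note it vanishes at $T_2 = T_1$, differentiate in $T_2$, and use the Cauchy mean-value theorem together with the strict monotonicity of $u \mapsto u(u-s)$ to show the derivative is positive. The only (harmless) difference is that you make explicit the observation that the ratio $\varphi_{H_2}/\varphi_{H_1} = (u(u-s))^{H_2-H_1}$ is increasing as a single composite map, where the paper compares the two power factors directly.
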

\begin{proof}
We can prove, equivalently, that
$$  \Delta(T_1,T_2) \neq 0,$$
where
\blue{
$$\Delta(T_1,T_2)=
\begin{vmatrix}
 \int_u^{T_1} e^{-\alpha s} \varphi_{H_1}(s,u) ds &  \int_u^{T_1} e^{-\alpha s} \varphi_{H_2}(s,u) ds \; \\
 \int_u^{T_2} e^{- \alpha s} \varphi_{H_1}(s,u) ds  & \int_u^{T_2} e^{-\alpha s} \varphi_{H_2}(s,u) ds 
\end{vmatrix}.$$}
Obviously, $\Delta(T_1,T_1)=0$, and 
\blue{
$$\frac{\partial \Delta(T_1,T_2)}{\partial T_2}=
e^{-\alpha T_2}\begin{vmatrix}
 \int_u^{T_1} e^{-\alpha s} \varphi_{H_1}(s,u) ds & \int_u^{T_1} e^{-\alpha s} \varphi_{H_2}(s,u) ds   \; \\
   \varphi_{H_1}(T_2,u) & \varphi_{H_2}(T_2,u)   
\end{vmatrix} $$}
%
Consider now the difference
$$ \frac{\varphi_{H_2}(T_2,u)}{\varphi_{H_1}(T_2,u)} - \frac{\int_u^{T_1} e^{-\alpha s} \varphi_{H_2}(s,u) ds}{\int_u^{T_1} e^{-\alpha s} \varphi_{H_1}(s,u) ds} = \frac{\varphi_{H_2}(T_2,u)}{\varphi_{H_1}(T_2,u)} - \frac{e^{-\alpha \xi} \varphi_{H_2}(\xi,u)}{e^{-\alpha \xi} \varphi_{H_1}(\xi,u)} $$
for a suitable $\xi \in (s,T_1)$ to be chosen due to the Cauchy theorem about the ratio of increments of differentiable functions. By definition of $\varphi_H$, we have 
$$ \frac{\varphi_{H_2}(\xi,s)}{\varphi_{H_1}(\xi,s)} = 
\xi^{H_2-H_1} (\xi-s)^{H_2-H_1} < T_2^{H_2-H_1}(T_2-s)^{H_2-H_1}=\frac{\varphi_{H_2}(T_2,s)}{\varphi_{H_1}(T_2,s)}, $$
which implies that $\frac{\partial \Delta(T_1,T_2)}{\partial T_2} > 0$: this, joint to $\Delta(T_1,T_1)=0$, implies that $\Delta(T_1,T_2) > 0$ for all $T_2 > T_1$. 
\end{proof}

 This result ensures that for the case in which the electricity price is driven by two fOU process with $\frac{1}{2}< H_1 < H_2$, the same mean-reversion speed $\alpha$ and $0 <T_1 < T_2$, the market is complete. Since for $\alpha = 0$ the two OU processes degenerate in two fractional Brownian motions, the result holds true also for this simpler case.   


\subsection{ Fractional Brownian motions, $H < 1/2$}






Now, let $Z=B^H$, a fBm with Hurst index $H \in (0,\frac{1}{2})$. In this case, we have the representation

 $$B_{t}^H=\int_0^{t} K^H_{B}(t,s) dW_s, \quad {t}\geq0$$
 for ease of notation in the following, we denote $K^H_{B}:=K_H$,
which explicitly can be written as 
\begin{align*}
K_H(t,s)&=\bar{c}_H \left[ \left(\frac{t}{s}\right)^{H-\frac{1}{2}}(t-s)^{H-\frac{1}{2}} +\left(\frac{1}{2}-H \right) s^{\frac{1}{2}-H}\int_s^t u^{H-\frac{3}{2}}(u-s)^{H-\frac{1}{2}} du\right]\\
&= \bar{c}_H s^{H-\frac{1}{2}} \left[ \left(\frac{t}{s}\right)^{H-\frac{1}{2}} \left( \frac{t}{s} -1\right)^{H-\frac{1}{2}} + \left(\frac{1}{2}-H \right)\int_1^{t/s} v^{H-\frac{3}{2}}(v-1)^{H-\frac{1}{2}} dv \right]\\
&= \bar{c}_H s^{H-\frac{1}{2}} \tilde K_H\left( \frac{t}{s},1\right),
\end{align*}
 where
 $$\bar{c}_H= \left( \frac{2 H \; \Gamma(\frac{3}{2}-H)}{\Gamma(H + \frac{1}{2})\Gamma(2-2H)}\right)^{\frac{1}{2}},$$ and
 \begin{equation}\label{tildeK}
 \tilde K_H(r,1)=  r^{H-\frac{1}{2}}(r-1)^{H-\frac{1}{2}}+\left( \frac{1}{2}-H\right)\int_1^r \varphi_H(v) dv,
 \end{equation}
with $\varphi_H(v)=v^{H-\frac{3}{2}}(v-1)^{H-\frac{1}{2}}$.  In the following, for brevity, we denote $  K_H(r) := \tilde K_H(r,1)$. Notice that
$$   K_H'(r) = \left( H - \frac12 \right) r^{H - \frac12} (r - 1)^{H - \frac32}. $$

Now, let $0<T < T_1 < T_2$ and $0 <H_1 < H_2 < \frac{1}{2}$. We want to investigate the behaviour of the matrix \eqref{matrixK}, that in this case writes:

$$\Delta(s)=
\begin{vmatrix}
 \bar{c}_{H_1}s^{H_1-\frac{1}{2}}   K_{H_1}(\frac{T_1}{s}) & \bar{c}_{H_2}s^{H_2-\frac{1}{2}}   K_{H_2}(\frac{T_1}{s})\\
  \bar{c}_{H_1}s^{H_1-\frac{1}{2}}  K_{H_1}(\frac{T_2}{s}) & \bar{c}_{H_2}s^{H_2-\frac{1}{2}}   K_{H_2}(\frac{T_2}{s})
\end{vmatrix},
$$
specifically, we wonder if there are any points $0< s \leq T$ where $\Delta(s)$ is zero. Thus, in the following, we proceed to answer this question. Equivalently, it is sufficient to analyze the behavior of the determinant
$$
\tilde \Delta(R_1,R_2)=
\begin{vmatrix}
   K_{H_1}(R_1) &    K_{H_2}(R_1)\\
    K_{H_1}(R_2) &   K_{H_2}(R_2)
\end{vmatrix},
$$
for $1 \leq R_1 \leq R_2$, where $R_1=\frac{T_1}{s}$ and $R_2=\frac{T_2}{s}$. 
Obviously, $\tilde \Delta (R_1, R_1)=0$, for any $R_1 >1$. Moreover, from \eqref{tildeK} it is evident that for any $R_2>1$, $\tilde \Delta(R_1,R_2) \rightarrow + \infty$ as $R_1 \searrow 1$. Also, it is evident that $\tilde \Delta(R_1,R_2)$ has no limit in $(1,1)$. 

To study the specifics of this manifold, 
let us calculate
\begin{align*}
\tilde \Delta(R_1,R_2)&= K_{H_1}(R_1)K_{H_2}(R_2)-K_{H_1}(R_2)K_{H_2}(R_1)\\
&=K_{H_2}(R_2)K_{H_2}(R_1)\left[ \frac{K_{H_1}(R_1)}{K_{H_2}(R_1)}-\frac{K_{H_1}(R_2)}{K_{H_2}(R_2)}\right].
\end{align*}
Let $f(R)=\frac{K_{H_1}(R)}{K_{H_2}(R)}$. So, we are interested in the sign of the difference $f(R_1)-f(R_2)$ for $1 < R_1 < R_2$.  
We have
\begin{equation*}
  \begin{split}
 f'(R)&=\left[ \left(H_1-\frac{1}{2}\right) R^{H_1-\frac{1}{2}}(R-1)^{H_1-\frac{3}{2}} K_{H_2}(R) \right.\\
 &\left.-\left(H_2-\frac{1}{2}\right)R^{H_2-\frac{1}{2}}
 (R-1)^{H_2-\frac{3}{2}}K_{H_1}(R)\right] \cdot (K_{H_2}(R))^{-2} \\
 &=\frac{R^{H_1-\frac{1}{2}}(R-1)^{H_1-\frac{3}{2}}}{(K_{H_2}(R))^{2}}\left[ \left( H_1-\frac{1}{2}\right)K_{H_2}(R) \right.\\
 & \left. -\left(H_2-\frac{1}{2} \right)R^{H_2-H_1}(R-1)^{H_2-H_1}K_{H_1}(R)\right].
  \end{split}  
\end{equation*}
Thus, we have to study the sign of the term in the square brackets that we denote by $I(R)$. By some computations, we obtain

\begin{equation}
\begin{split}
I(R)&=\left(H_1-H_2 \right)R^{H_2-\frac{1}{2}}(R-1)^{H_2-\frac{1}{2}}\\
&-\left(\frac{1}{2}-H_1\right)\left(\frac{1}{2}-H_2 \right)\int_1^R u^{H_2-\frac{3}{2}}(u-1)^{H_2-\frac{1}{2}} du\\
&+ \left(\frac{1}{2}-H_1\right)\left(\frac{1}{2}-H_2 \right) R^{H_2-H_1}(R-1)^{H_2-H_1} \int_1^R u^{H_1-\frac{3}{2}}(u-1)^{H_1-\frac{1}{2}} du.
\end{split}
\end{equation}
We can see that if $R \searrow 1$, then $I(R) \rightarrow - \infty$,  if $R \rightarrow + \infty$, then $I(R) \rightarrow + \infty$. Moreover, it is easy to see that $I(R)$ strictly increases. 
This means that for some $R^*$, $f'(R)>0$, when $1<R<R^*$, $f'(R^*)=0$, and $f'(R)>0$ for $R> R^*$. That is, $R^*$ is a minimum point of $f$.
Moreover, when $R \searrow 1$ , $f(R) \rightarrow +\infty$ and when $R \rightarrow + \infty$, $f(R) \rightarrow f_{\infty}$, where $$f_\infty=\frac{1/2 -H_1}{1/2-H_2}\frac{\int_1^{\infty} u^{H_1-\frac{3}{2}}(u-1)^{H_1-\frac{1}{2}}du}{\int_1^\infty u^{H_2-\frac{3}{2}}(u-1)^{H_2-\frac{1}{2}} du},$$
and by the sign of the derivative of $f$, we have that there exists $R_* < R^*$, for which $f(R^*)<f(R_*)=f_{\infty}$. 

Now, let us recall that we are interested in the sign of $f(R_1)-f(R_2)$ in order to determine the sign of $\tilde \Delta$. Fix $R_1$ and consider $f(R_1)-f(R_2)$ as a function of $R_2$. 
From the analysis above, we have that if $R_1 < R^*$, $f(R_2)$ decreases when $R_2$ increases from $R_1$ to $R^*$,  
then for $R_2>R^*$, $f(R_2)$ increases up to $f_\infty$.
Thus, 
\begin{itemize}
    \item if $R_1<R_*$,  
    $\tilde \Delta(R_1,R_2) >0$ for all $R_2 >R_1$, 
    \item if $R_* <R_1<R^*$, then $f(R_1)<f_\infty$ and there exists unique $R_2^0>R^*$ such that $\tilde \Delta(R_1,R_2)  >0$ for $R_2<R_2^0$, $\tilde \Delta(R_1,R_2)  < 0$ for $R_2 >R_2^0$ and $\tilde \Delta(R_1,R_2^0)=0$,
    \item if $R_1 \geq R^*$, then $\tilde \Delta(R_1,R_2)  <0$ for all $R_2>R_1$.
\end{itemize}

Now, let $  R_* <\bar R_1 <R^*$ and $\bar R_2 > \bar R_1$ such that $\tilde \Delta(\bar R_1, \bar R_2) <0$. It means that $\bar R_2 > R^*$ and for any $R_1$ such that $\bar R_2>R_1>\bar R_1$,  $f(R_1)<f(\bar R_2)$ holds. 
If $\tilde \Delta(\bar R_1, \bar R_2) <0$, it means that $  R_* <  \bar R_1  $, and then   $\tilde \Delta( R_1, \bar R_2) <0$ for all $\bar R_1< R_1 < \bar R_2$.

 Denote $D^+$ and $D^-$ the sets in the "triangle" bounded by the lines $R_1=1, R_1=R_2$, where $\tilde{\Delta}(R_1,R_2)  >0$ and $\tilde{\Delta}(R_1,R_2)  <0$, respectively. Then these sets have the form as in the Figure \ref{fig:comparethr1}.

\begin{figure}[h]
 \centering
    \begin{minipage}{0.4\textwidth}
   \hspace{-2.5cm}
    \begin{tikzpicture}[scale=0.45]
        \begin{axis}[
            xmin = 0, xmax = 5,
            ymin = 0, ymax = 5,
            axis lines=middle,
            y=2cm,
            x=2cm,
            xtick={1,1.5, 2.26},
            xticklabels={$1$,$R_*$, $R^*$},
            ytick={1},
            yticklabels={$1$}]
            \addplot[
                domain = 1:5, , name path=A
                ]{x};
            \addplot[
                samples=100,
                domain = 1.5:2.26,
                name path=B]{0.2/(x-1.5)+2};
            \draw[dashed] (0,1) -- (1,1);
            \draw[dashed] (1,0) -- (1,1);
            \draw[] (1,1) -- (1,5);
            \draw[dashed] (1.5,0) -- (1.5,5);
            \draw[dashed] (2.27,0) -- (2.26,2.26);
            \addplot[draw=none, name path=C] {5};
            \addplot[gray] fill between[of=A and B,soft clip={domain=1.5:2.26}];
            \addplot[gray] fill between[of=A and C, soft clip={domain=1:1.5}];
            \addplot[lightgray] fill between[of=B and C, soft clip={domain=1.60:5}];
            \node[] at (axis cs: 1.4,2.3) {\resizebox{0.75cm}{!}{$D^+$}};
            \node[] at (axis cs: 2.25,3) {\resizebox{0.75cm}{!}{$D^-$}};
        \end{axis}
    \end{tikzpicture}
    \caption{}\label{fig:comparethr1}
    \end{minipage}
   \begin{minipage}{0.4\textwidth}
   \hspace{-2.0cm}
    \begin{tikzpicture}[scale=0.45]
        \begin{axis}[
            xmin = 0, xmax = 5,
            ymin = 0, ymax = 5,
            axis lines=middle,
            y=2cm,
            x=2cm,
            xtick={1,1.5, 2.26},
            xticklabels={$1$,$R_*$, $R^*$},
            ytick={1, 1.5},
            yticklabels={$1$, $\frac{T_2}{T_1}$}]
            \addplot[
                domain = 0:5, , name path=D
                ]{1.5*x};
            \addplot[
                domain = 1:5, , name path=A
                ]{x};
            \addplot[
                samples=100,
                domain = 1.5:2.26,
                name path=B]{0.2/(x-1.5)+2};
            \draw[dashed] (0,1) -- (1,1);
            \draw[dashed] (1,0) -- (1,1);
            \draw[] (1,1) -- (1,5);
            \draw[dashed] (1.5,0) -- (1.5,5);
            \draw[dashed] (0,1.5) -- (1,1.5);
            \draw[dashed] (2.27,0) -- (2.26,2.26);
            \addplot[draw=none, name path=C] {5};
            \draw [<-, thick](2.62,2.2) -- (1.79,2.685);
            \filldraw[black] (1.79,2.685) circle (2pt);
            \node[] at (axis cs: 3.7,1.7) {$\tilde \Delta\left(\frac{T_1}{s}, \frac{T_2}{s}\right) = 0$};
            \addplot[white] fill between[of=A and C, soft clip={domain=1:1.5}];
        \end{axis}
     \end{tikzpicture}
    \caption{}
    \label{fig:comparethr2}
\end{minipage}
\end{figure}

Coming back to our original problem, we want now to understand how many $0<s\le T < T_1<T_2$ exist for which $\Delta(s)=0$, that is equivalent to count how many point $(R_1,R_2)$ with $R_1=\frac{T_1}{s}$ and  $R_2=R_1 \cdot \frac{T_2}{T_1}$ are such that $\tilde \Delta(R_1,R_2)=0$.
Note that if $s\rightarrow 0$, then, starting from some $s_0$ , $\tilde\Delta(\frac{T_1}{s},\frac{T_2}{s})<0$  for all $s<s_0$. Instead, if $s \rightarrow T$, we can have two cases. It can be $\Delta(T)\leq 0$ and then for all $0 <s < T$, $\Delta(s)<0$. Or it can be $\Delta(T)>0$, and in this case, by the analysis above, we can assert that there is a unique $0<s\le T $ such that $\Delta(s)=0$. It corresponds to the unique point $(\frac{T_1}{s}, \frac{T_2}{s})$ laying on the boundary of $D^-$ (see Figure \ref{fig:comparethr2}). So,
we have the following result.

\begin{proposition}
If $\Delta(T)<0$, then for all $0 <s \leq T$, $\Delta(s)<0$, on the other hand, if $\Delta(T) \geq 0$ there exists a unique $0<s\le T $ such that $\Delta(s)=0$.
\end{proposition}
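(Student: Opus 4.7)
The plan is to recast the statement as a geometric question about a single ray in the $(R_1,R_2)$-plane, building directly on the analysis that precedes the proposition. Setting $R_1(s):=T_1/s$ and $R_2(s):=T_2/s$, one has $\Delta(s)=\bar c_{H_1}\bar c_{H_2} s^{H_1+H_2-1}\,\tilde\Delta(R_1(s),R_2(s))$, so the strictly positive prefactor can be discarded and the zeros of $\Delta$ coincide with those of $\tilde\Delta$ along the trajectory $s\mapsto(R_1(s),R_2(s))$. As $s$ ranges over $(0,T]$, this trajectory parameterises the portion of the ray $R_2=(T_2/T_1)R_1$ with $R_1\geq T_1/T$, with $s\searrow 0$ corresponding to $R_1\to\infty$ along the ray.

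Next, I would extract from the preceding case analysis the shape of the zero set $\{\tilde\Delta=0\}$ inside the triangle $\{1<R_1<R_2\}$. For $R_1\leq R_*$ the whole vertical slice $\{R_2>R_1\}$ lies in $D^+$; for $R_1\geq R^*$ it lies in $D^-$; and for $R_1\in(R_*,R^*)$ there is exactly one $R_2^0(R_1)>R^*$ with $\tilde\Delta(R_1,R_2^0(R_1))=0$. This $R_2^0$ is characterised implicitly by $f(R_1)=f(R_2^0(R_1))$, where $f$ is strictly decreasing on $(1,R^*)$ and strictly increasing on $(R^*,\infty)$ with limits $f(R)\to+\infty$ as $R\searrow 1$ and $f(R)\to f_\infty$ as $R\to\infty$. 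Inverting $f|_{(R^*,\infty)}$ and composing with $f|_{(R_*,R^*)}$ gives $R_2^0$ as a strictly decreasing continuous function, with $R_2^0(R_1)\to\infty$ as $R_1\searrow R_*$ and $R_2^0(R_1)\to R^*$ as $R_1\nearrow R^*$.

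With these two ingredients the proposition becomes essentially one geometric observation. The ray $R_2=(T_2/T_1)R_1$ is strictly increasing in $R_1$, while the boundary curve $R_2=R_2^0(R_1)$ is strictly decreasing in $R_1$; hence they meet in at most one point, which translates into the statement that $\Delta$ has at most one zero in $(0,T]$. Since $R_1(s)\to\infty$ as $s\to 0^+$, the tail of the trajectory is eventually in $D^-$, so $\Delta(s)<0$ for all sufficiently small $s$. I would then split into cases: if $\Delta(T)<0$, both endpoints of the trajectory lie in $D^-$, so the at-most-one-crossing rule forces no crossing and $\Delta(s)<0$ on $(0,T]$; if $\Delta(T)>0$, the starting point is in $D^+$ and the trajectory reaches $D^-$, so continuity plus uniqueness of the crossing gives exactly one zero; and if $\Delta(T)=0$, then $s=T$ itself is the unique zero, because moving to any $s<T$ pushes the point onto a strictly larger $R_1$ along the ray, where the decreasing boundary curve has strictly smaller $R_2$ value than the ray, putting the trajectory in $D^-$.

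The main obstacle I expect is the rigorous justification of the monotonicity, continuity, and boundary behaviour of the implicit map $R_2^0$, together with a careful verification that $f$ really is strictly monotone on each of the two pieces $(1,R^*)$ and $(R^*,\infty)$ (this uses the sign analysis of $I(R)$ and the strict monotonicity of $I$ established earlier). Once $R_2^0$ is known to be a strictly decreasing continuous function with the stated limits, the rest of the argument is a clean comparison of one increasing and one decreasing curve, and the three cases of the proposition follow immediately.
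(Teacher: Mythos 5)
Your proposal is correct and follows essentially the same route as the paper: reduce to the sign of $\tilde\Delta$ along the increasing ray $R_2=(T_2/T_1)R_1$, use the vertical-slice analysis of $f(R_1)-f(R_2)$ to describe $D^+$ and $D^-$, note the trajectory ends in $D^-$ as $s\to0^+$, and conclude there is at most one crossing. The only (welcome) refinement is that you make explicit the strict monotonicity and limits of the implicit boundary curve $R_2^0(R_1)$, which the paper conveys via the figure and the observation that $D^-$ is absorbing along the ray.
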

 Thus, we have that the condition ensuring completeness of the market is satisfied (see Corollary \ref{complete2}), indeed the determinant of $\bar K(t)$ vanishes at most in one point. 

\subsection{Mixed case (OU and fOU with $H > 1/2$)} \label{mixed}
 
As we already said at the beginning of this section, the cases when the Gaussian-Volterra processes driving the spot price $S$ belong to different classes are quite straightforward to treat. As an example, we here report a result relative to the case when we have two factors, namely a standard OU process and a fOU process with $H > 1/2$. 
\begin{proposition}
Let us consider a standard OU process, with kernel $K_1(t,u) := e^{\alpha_1(t-u)}$, and a fOU process with $H > 1/2$, with kernel 
$$ K_2(t,u) :=  c_H u^{\frac{1}{2}-H} \int_u^t e^{\alpha_2 (t-s)} \varphi_H(s,u) ds,$$
 where, as before
$$\varphi_H(s,u)=s^{H-\frac{1}{2}}(s-u)^{H-\frac{3}{2}}.$$
 
If $\alpha_1 \leq \alpha_2$, then for $T_1 < T_2$ we have
\blue{$$ \begin{vmatrix}
e^{\alpha_1 (T_1-u)} &  u^{\frac{1}{2}-H} \int_u^{T_1} e^{\alpha_2 (T_1-s)} \varphi_{H}(s,u) ds \\
  e^{\alpha_1 (T_2-u)}  & u^{\frac{1}{2}-H} \int_u^{T_2} e^{\alpha_2 (T_2-s)} \varphi_{H}(s,u) ds 
\end{vmatrix} \neq 0.$$}
\end{proposition}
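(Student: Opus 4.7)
The plan is to mirror the argument used in the two previous propositions: reduce the $2\times 2$ determinant to an expression of the form $(\text{strictly positive factor})\cdot\bigl[F(T_2) - F(T_1)\bigr]$, and then verify that $F$ is strictly monotone on $(T_1,+\infty)$, starting from the obvious observation $\Delta(T_1,T_1) = 0$.

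First, I would introduce the auxiliary function
$$g(T) := \int_u^T e^{\alpha_2(T-s)}\,\varphi_H(s,u)\,ds, \qquad T > u,$$
so that (up to the irrelevant multiplicative constant $c_H$, which cannot affect whether the determinant vanishes) the second column of the matrix is $u^{1/2-H}g(T_j)$ for $j=1,2$. Expanding the determinant and factoring out common column terms gives
$$\Delta(T_1,T_2) = u^{1/2-H}\,e^{\alpha_1(T_1-u)}\bigl[g(T_2) - e^{\alpha_1(T_2-T_1)}\,g(T_1)\bigr].$$
The prefactor is strictly positive for $u \in (0,T_1)$, so it remains to check that the bracket does not vanish for $T_2 > T_1$.

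Next, I would set $h(T) := e^{-\alpha_1 T}g(T)$. A direct rewriting shows that the bracket equals $e^{\alpha_1 T_2}\bigl[h(T_2) - h(T_1)\bigr]$, so it suffices to prove that $h$ is strictly increasing on $(u,+\infty)$. By Leibniz' rule applied to $g$,
$$g'(T) = \varphi_H(T,u) + \alpha_2\,g(T),$$
and therefore
$$h'(T) = e^{-\alpha_1 T}\bigl[g'(T) - \alpha_1 g(T)\bigr] = e^{-\alpha_1 T}\bigl[\varphi_H(T,u) + (\alpha_2-\alpha_1)\,g(T)\bigr].$$
Under the hypothesis $\alpha_1 \leq \alpha_2$, the second summand is nonnegative, while $\varphi_H(T,u) = T^{H-1/2}(T-u)^{H-3/2} > 0$ strictly for $T > u$; in particular $g(T) > 0$ as well. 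Consequently $h'(T) > 0$, so $h$ is strictly increasing and $h(T_2) - h(T_1) > 0$, which yields $\Delta(T_1,T_2) > 0$, and a fortiori $\Delta(T_1,T_2) \ne 0$.

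The main (and quite mild) obstacle is the bookkeeping needed to peel off the correct exponential prefactor so as to expose the clean form $h(T_2) - h(T_1)$; once the substitution $h = e^{-\alpha_1 T}g$ is made, the hypothesis $\alpha_1 \leq \alpha_2$ enters transparently. Note that the standing assumption $H > 1/2$ is used only to guarantee that $\varphi_H(\cdot,u)$ is integrable at $s = u$, which in turn is what makes $g$ and $g'$ well defined; and the limiting case $\alpha_1 = 0$ recovers the fBm/fOU result proved just before, giving a useful sanity check.
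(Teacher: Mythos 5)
Your proof is correct, but it reaches the conclusion by a different mechanism than the paper's. After peeling off the same positive prefactors, the paper observes that non-vanishing of the reduced determinant is equivalent to
\[
e^{(\alpha_1-\alpha_2)(T_1-T_2)} \;\neq\; \frac{\int_u^{T_1} e^{-\alpha_2 s}\,\varphi_H(s,u)\,ds}{\int_u^{T_2} e^{-\alpha_2 s}\,\varphi_H(s,u)\,ds},
\]
and settles this in one line: under $\alpha_1\le\alpha_2$ the left-hand side is $\ge 1$, while the right-hand side is $<1$ because the integrand is positive and $T_1<T_2$. You instead transport to this mixed case the derivative-plus-boundary-value scheme that the paper reserves for its two ``same-class'' propositions (pairs of RL-driven and fBm-driven OU processes): you write the bracket as $e^{\alpha_1 T_2}\bigl[h(T_2)-h(T_1)\bigr]$ with $h(T)=e^{-\alpha_1 T}g(T)$, compute $h'(T)=e^{-\alpha_1 T}\bigl[\varphi_H(T,u)+(\alpha_2-\alpha_1)g(T)\bigr]>0$, and conclude by strict monotonicity. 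Both arguments hinge on exactly the same two facts --- positivity of $e^{-\alpha_2 s}\varphi_H(s,u)$ and the sign of $\alpha_2-\alpha_1$ --- and both in fact yield the strict inequality $\Delta(T_1,T_2)>0$; the paper's route is marginally more elementary (no differentiation, no smoothness of $g$ required), while yours has the merit of being uniform with the technique of the neighbouring propositions. One side remark to correct: taking $\alpha_1=0$ degenerates the first factor to a standard Brownian motion, not to the two-fOU setting of the preceding proposition, so that ``sanity check'' does not recover the earlier result as stated.
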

\begin{proof}
We can prove, equivalently, that
$$ \Delta(T_1,T_2) \neq 0,$$
where
\blue{$$ \Delta(T_1,T_2)=
\begin{vmatrix}
 e^{(\alpha_1 - \alpha_2) T_1} & \int_u^{T_1} e^{- \alpha_2 s} \varphi_{H}(s,u) ds \\
  e^{(\alpha_1 - \alpha_2) T_2} &  \int_u^{T_2} e^{- \alpha_2 s} \varphi_{H}(s,u) ds 
\end{vmatrix}.$$}
Now, $\Delta(T_1,T_2) \neq 0$ is equivalent to
$$ \frac{e^{(\alpha_1 - \alpha_2) T_1}}{e^{(\alpha_1 - \alpha_2) T_2}} \neq \frac{\int_u^{T_1} e^{-\alpha_2 s} \varphi_{H}(s,u) ds}{\int_u^{T_2} e^{-\alpha_2 s} \varphi_{H}(s,u) ds},  $$
which is true if $\alpha_1 \leq \alpha_2$, as in this case 
$$ \frac{e^{(\alpha_1 - \alpha_2) T_1}}{e^{(\alpha_1 - \alpha_2) T_2}} \geq 1 >  \frac{\int_u^{T_1} e^{-\alpha_2 s} \varphi_{H}(s,u) ds}{\int_u^{T_2} e^{-\alpha_2 s} \varphi_{H}(s,u) ds}.  $$
\end{proof}
\blue{After these results related to the absence of arbitrage and completeness of our model, in the following sections we will face some relevant problems in {electricity} 
markets, as the portfolio optimization problem and the option pricing.} 

\blue{\section{Optimal investment}\label{Sec:Optimal}}

Now we want to solve the problem of an agent who can invest in the electricity market and wants to maximize the expected utility of her/his wealth at the terminal time $T \in [0,T_1]$. The dynamics for the portfolio wealth has already been given in Equation \eqref{dX1}, which we can rewrite as
\begin{equation} \label{dX}
dX_t^\Delta = \sum_{j=1}^n \Delta^j_t dF(t,T_j)=  \sum_{j=1}^n \Delta^j_t K(T_j,t) \cdot dW_t^\bQ = \Delta_t \bar K(t)  dW_t^\bQ. 
\end{equation}
We assume that no arbitrage exists in our market, i.e. there exists an equivalent martingale measure $\bQ$. 
More precisely, the agent wants to  solve the problem
\begin{equation}\label{primal}
\sup_{\Delta} \bE_{\bP}[u(X_T^{\Delta})], 
\end{equation}
where $u$ is a known 
utility function, i.e. a real function which is non-decreasing and concave, and  $\Delta$ is chosen among the admissible strategies, as defined in Section 2, with the additional requirements that  $u(X_T^{\Delta})$ is well-defined and such that Equation \eqref{dX}, with initial condition $X_0^\Delta := x > 0$,  has a unique strong solution $X^\Delta$. 
  We thus restrict our admissible strategies to the class satisfying also the additional assumptions above, and denote this new class of admissible strategy by $\mathcal{A}$. 

 Since each $F(\cdot,T_j)$, $j = 1,\ldots,m$, is a martingale under any equivalent martingale measure $\bQ$, Equation \eqref{dX} makes $X^\Delta$ a local martingale, which is also bounded from below if $\Delta$ is an admissible strategy: this implies that $X^\Delta$ is a supermartingale under any equivalent martingale measure $\bQ$. Since $X_0^\Delta = x$, this implies that 
\begin{equation} \label{admissible}
\bE_{\bQ}[X_T^\Delta] \leq x. 
\end{equation}

\begin{remark}
In the case when the utility function is well-defined only on non-negative or strictly positive wealth, as in the cases $U(x) = x^\gamma$, $\gamma \in (0,1)$ or $U(x) = \log x$, respectively, requiring that $u(X_T^{\Delta})$ is well-defined is equivalent to require that $X_T^{\Delta} \geq 0$ or $X_T^{\Delta} > 0$   a.s., respectively.  The first requirement simply corresponds to take the constant in the definition of admissibility being equal to zero, while the second one poses a slight restriction to this condition. In both cases,  by the properties of conditional expectation, the supermartingale property for $X^\Delta$ implies that also $X_t^{\Delta} \geq 0$ or $X_t^{\Delta} > 0$  a.s.  for all $t \in [0,T]$, respectively. 
\end{remark}

 



From now on, we assume that the market is complete, so that the equivalent martingale measure $\bQ$ is unique. For this reason, we can use the martingale approach as in \cite[Chap.~20]{Bjork} to solve the problem \eqref{primal}, which uses the fact that every sufficiently regular payoff 
can be represented as an admissible portfolio via Equation \eqref{completeness}, with $\Delta \in {\mathcal A}$. However, here the assets follow an arithmetic dynamics (as e.g. in \cite{BPV,HinWag,Oksendal}), differently from the geometric dynamics of \cite{Bjork} (and of the vast majority of financial literature), i.e. where the assets' dynamics are proportional to asset prices themselves. While this does not change the structure of the optimal terminal wealth $X_T^*$, and consequently of the optimal wealth process $X^* = (X^*_t)_t$, it will have an impact on the optimal portfolio strategy $\Delta^*$. 

 As in \cite[Chap.~20]{Bjork}, we incorporate 
the budget constraint in Equation \eqref{admissible} into  
the Lagrange function
\begin{align*}
 L(X_T^{\Delta}, \lambda)&:= \bE_{\bP}[u(X_T^{\Delta})]-\lambda\left(\bE_{\bQ}[X_T^{\Delta}] - x \right)\\
 &= \bE_{\bP} \left[ u(X^{\Delta}_T)-\lambda\left(X_T^{\Delta} Z_T - x\right)\right],
\end{align*}
 which is used in the problem 
$$ \displaystyle \inf_{\lambda > 0} \sup_{\Delta \in \mathcal{A}} L(X^\Delta_T,\lambda) $$
%
Since in our setting the market is complete, this problem is brought back to the
formulation
$$ \displaystyle \inf_{\lambda > 0} \sup_{X_T} L(X_T,\lambda) $$
i.e., we now maximize in the generic random variable $X_T$, remembering that, once we obtain the maximizer $X_T^*$, in order to obtain the optimal portfolio strategy $\Delta^*$  we use a martingale representation for the optimal portfolio $X_T^* = X_T^{\Delta^*}$.  \\
For $\lambda>0$ fixed, we solve the equation 
$$u'(X_T^*)-\lambda \frac{d \bQ}{ d \bP}=0,$$
from which 
\cite[Proposition 20.3]{Bjork}, the optimal terminal wealth $X_T^*$ results in
\begin{equation}\label{Xt}
    X_T^{*} = I\left(\lambda^* Z_T\right),
\end{equation}
with $I=(u')^{-1}$,   and the Lagrange multiplier $\lambda^*$ is the one realizing the budget constraint $\bE_\bQ[X^*_T] = x$ in Equation \eqref{admissible}. 



\begin{remark}
We notice that the result for the optimal terminal wealth in Equation \eqref{Xt} is not new, and apparently also does not depend on the fact that we are using Gaussian Volterra processes in our model. This is because the result in Equation \eqref{Xt} is true for all complete markets with equivalent martingale measure $\bQ$ having density $Z_T$ with respect to the real-world probability $\bP$, where agents want to maximize a utility function $u$. However, the dependency on the specific features of our market, namely on the kernels $K_i$, already appears implicitly when we take into account that the density $Z_T$ has the Girsanov representation given by Equation \eqref{ZT}, which contains the kernels in $\theta$ through the representation results of Proposition 1. Moreover, when characterizing the optimal portfolio, its dependence on the particular structure of our market will be even more evident, see e.g. Proposition \ref{optimalportfolio} below.   
\end{remark}

\subsection{The case of CRRA utility functions} 

Let us now consider 
the case when the utility function is of Constant Relative Risk Aversion (CRRA) type, i.e. $u(x) := \frac1\gamma x^\gamma$, with $\gamma < 1$, $\gamma \neq 0$, or $u(x) = \log x$ (which is often seen conventionally as "the
case $\gamma = 0$", see e.g. the discussion in \cite[Chapter 20.7]{Bjork}). Then, in order for $\Delta$ to be an admissible strategy, we must impose that $X^\Delta_T \geq 0$ in the case $\gamma \in (0,1)$ and $X^\Delta_T > 0$ in all the other cases. Moreover, for all $\gamma < 1$ (including the case "$\gamma = 0$", which is the case of a log utility function) we have that $u'(x) = x^{\gamma - 1}$, thus $I(y) = y^{\frac{1}{\gamma-1}}$. \\

\textit{The optimal $\lambda^*$}.
From \eqref{Xt}, we have $X_T^*=\left( \lambda Z_T\right)^{\frac{1}{\gamma-1}}$,    and imposing $\bE_\bQ[X^*_T] = x$ we find  
%
$$\lambda^*=\left( \frac{x}{\bE_{\bP}\left[  \left(Z_T\right)^{\frac{\gamma}{\gamma-1}}  \right]}\right)^{\gamma-1}.$$
Hereafter, we will follow \cite[Ch. 20]{Bjork}. Let us set
$$ H_0=\bE_{\bP}\left[Z_T^{-\beta}\right], \qquad \mbox{ with } \qquad \beta=\frac{\gamma}{1-\gamma},$$
%
Then, the optimal terminal wealth 
writes
\begin{equation}\label{XTopt}
X_T^* = \left(\lambda^* Z_T \right)^{\frac{1}{\gamma-1}}
= \frac{x}{H_0}Z_T^{\frac{1}{\gamma-1}},
\end{equation}
and the optimal expected utility is
\begin{equation*}\label{optExpU}
\bE_\bP[u(X_T^*)] = \bE_\bP\left[\frac{(X_T^*)^{\gamma}}{\gamma}\right]= 
\frac{x^\gamma}{\gamma}H_0^{1-\gamma},
\end{equation*}
%


\textit{The Optimal Wealth Process.}
\noindent In \eqref{XTopt}, we have computed the optimal terminal wealth 
$X_T^*$, but it is also possible to find an explicit formula for the entire optimal wealth process $X^*$.  In this, we follow \cite[Chapter 20.5.2]{Bjork}, but for the reader's convenience here we write an {\em ad-hoc} derivation for our case. 

\begin{proposition}\label{OWP}
The optimal wealth process $X^*= (X^*_t)_{t\geq 0}$  is given by
\begin{equation}\label{optimalX}
X^*_t= x \frac{H_t}{H_0} Z_t^{-\frac{1}{1-\gamma}},
\end{equation}
where
\begin{equation}\label{Ht}
H_t:= \bE_\bP\left[ \left. \frac{Z_T^{-\beta}}{Z_t^{-\beta}} \right|{\mathcal F}_t \right]  =\exp\left\{\frac{1}{2}\int_t^T \frac{\beta}{1-\gamma}|\theta_s|^2 \; ds\right\}. 
\end{equation}
\end{proposition}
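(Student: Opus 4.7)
The plan is to apply the martingale approach: by completeness of the market and the martingale representation theorem, the optimal terminal wealth $X^*_T$ given in \eqref{XTopt} is attained by some admissible portfolio $\Delta^*\in\mathcal{A}$, and the corresponding wealth process $X^* = X^{\Delta^*}$ is a true $\bQ$-martingale (the budget constraint \eqref{admissible} being saturated at the optimum). Thus
$$ X^*_t = \bE_\bQ[X^*_T \mid \cF_t],$$
which via the Bayes rule for conditional expectations becomes
$$ X^*_t = \frac{1}{Z_t}\,\bE_\bP[Z_T X^*_T \mid \cF_t].$$

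First I would substitute $X^*_T = (x/H_0)\,Z_T^{1/(\gamma-1)}$ from \eqref{XTopt}. Since $1 + \tfrac{1}{\gamma-1} = \tfrac{\gamma}{\gamma-1} = -\beta$, this gives
$$ X^*_t = \frac{x}{H_0\,Z_t}\,\bE_\bP\!\left[Z_T^{-\beta}\,\big|\,\cF_t\right].$$
By the very definition of $H_t$, $\bE_\bP[Z_T^{-\beta}\mid \cF_t] = Z_t^{-\beta} H_t$, and using the algebraic identity $1 + \beta = \frac{1}{1-\gamma}$ one obtains \eqref{optimalX} directly.

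For the second equality in \eqref{Ht}, I would write $Z_T^{-\beta}/Z_t^{-\beta} = (Z_T/Z_t)^{-\beta}$ and use the Girsanov-type expression \eqref{ZT}, which yields
$$ (Z_T/Z_t)^{-\beta} = \exp\!\left(\beta\!\int_t^T \theta_s \cdot dW_s + \tfrac{\beta}{2}\!\int_t^T |\theta_s|^2\,ds\right).$$
Completing the square, this factors as the Dol\'eans-Dade exponential $\exp\!\bigl(\beta\!\int_t^T \theta_s\cdot dW_s - \tfrac{\beta^2}{2}\!\int_t^T |\theta_s|^2\,ds\bigr)$ times the deterministic factor $\exp\!\bigl(\tfrac{\beta(1+\beta)}{2}\int_t^T |\theta_s|^2\,ds\bigr)$. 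Under the standing assumption that $\theta$ is deterministic with $\int_0^{\bar T}|\theta_s|^2 ds < \infty$, Novikov's condition is trivially satisfied, so the stochastic exponential has conditional $\bP$-expectation equal to $1$. Since $\beta(1+\beta) = \beta/(1-\gamma)$, the deterministic expression in \eqref{Ht} follows.

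The only delicate point is the passage from supermartingale to true martingale for $X^*$ under $\bQ$: a priori each admissible $X^\Delta$ is only a bounded-from-below local $\bQ$-martingale, hence a supermartingale. However, completeness together with the integrability of the candidate $X^*_T$ (which holds since, with $\theta$ deterministic, $Z_T$ has finite moments of every order, and thus so does $Z_T^{1/(\gamma-1)}$) provides an admissible portfolio $\Delta^*$ whose wealth process is a genuine $\bQ$-martingale, legitimising the use of the tower property above. Once this is secured, the remainder of the argument is purely algebraic.
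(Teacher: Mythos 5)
Your proof is correct and follows essentially the same route as the paper's: the $\bQ$-martingale property of the optimal wealth, the abstract Bayes formula, and the explicit Gaussian computation of $\bE_\bP[Z_T^{-\beta}\mid\cF_t]$ via the Dol\'eans-Dade factorization (the paper's auxiliary martingale $Z^0$), using $\beta(1+\beta)=\beta/(1-\gamma)$ and $1+\beta=\tfrac{1}{1-\gamma}$. Your explicit justification of why $X^*$ is a true $\bQ$-martingale rather than merely a supermartingale is a point the paper passes over more quickly, but the substance of the argument is identical.
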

\blue{For the proof we refer to the Appendix.}

\textit{The Optimal Portfolio.}
Let us denote by   $\mu_H(t) := -\frac{1}{2} \frac{\beta}{1-\gamma}|\theta_t|^2 $  
the drift of the process $H$ defined in \eqref{Ht},  so that 
\begin{equation*}
  dH_t=H_t \mu_H(t) dt,   
\end{equation*}
  with the terminal condition $H_T = 1$.  


 \begin{proposition} \label{optimalportfolio}
The optimal portfolio 
process $\Delta^* = (\Delta^*_t)_{t\geq 0} \in {\mathcal A}$ is given by 
\begin{equation*} \label{Deltastar}
  \Delta^*_t= X^*_t \frac{1}{1-\gamma} \theta_t \cdot \blue{\bar K_{\mathrm{left}}(t)^{-1}}, \quad \text{for a.a.} \quad t \leq T
 \end{equation*}
 \end{proposition}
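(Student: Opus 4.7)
The plan is to apply Itô's formula to the explicit expression of $X^*$ in Equation \eqref{optimalX}, identify the resulting diffusion coefficient under $\bQ$, and then invert the matrix relation $\Delta_t \bar K(t) = \text{(diffusion coefficient)}$ using the left inverse guaranteed by completeness.

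\textbf{Step 1: Dynamics of $X^*$ under $\bP$.} Recall that $dZ_t = -Z_t\,\theta_t\cdot dW_t$ (so $d\langle Z\rangle_t = Z_t^2 |\theta_t|^2\,dt$) and $dH_t = H_t\,\mu_H(t)\,dt$ with $\mu_H(t) = -\tfrac{1}{2}\tfrac{\beta}{1-\gamma}|\theta_t|^2 = -\tfrac{\gamma}{2(1-\gamma)^2}|\theta_t|^2$. Apply Itô's formula to $Y_t := Z_t^{-1/(1-\gamma)}$: with $f(z)=z^{-1/(1-\gamma)}$ a short computation gives
\begin{equation*}
dY_t = Y_t\left[\frac{1}{1-\gamma}\theta_t\cdot dW_t + \frac{1}{2(1-\gamma)}\cdot\frac{2-\gamma}{1-\gamma}|\theta_t|^2\,dt\right].
\end{equation*}
Since $H_t$ is of bounded variation, writing $X^*_t = \frac{x}{H_0}H_t Y_t$ yields
\begin{equation*}
dX^*_t = X^*_t\left[\frac{1}{1-\gamma}\theta_t\cdot dW_t + \left(\frac{2-\gamma}{2(1-\gamma)^2} - \frac{\gamma}{2(1-\gamma)^2}\right)|\theta_t|^2\,dt\right] = X^*_t\,\frac{1}{1-\gamma}\theta_t\cdot\bigl(dW_t + \theta_t\,dt\bigr),
\end{equation*}
where the drift terms collapse because $(2-\gamma)-\gamma = 2(1-\gamma)$.

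\textbf{Step 2: Rewrite under $\bQ$.} By the Girsanov identity \eqref{WQ}, the expression in parentheses equals $dW^\bQ_t$, so
\begin{equation*}
dX^*_t = \frac{X^*_t}{1-\gamma}\,\theta_t\cdot dW^\bQ_t.
\end{equation*}

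\textbf{Step 3: Match with the self-financing dynamics.} From \eqref{dX}, any self-financing wealth process has dynamics $dX^\Delta_t = \Delta_t\bar K(t)\,dW^\bQ_t$. Identifying the integrands (which is justified by uniqueness of the Itô-integral representation of the continuous $\bQ$-martingale $X^*$ with respect to the Brownian motion $W^\bQ$) gives the linear equation
\begin{equation*}
\Delta^*_t\,\bar K(t) = \frac{X^*_t}{1-\gamma}\,\theta_t, \qquad \text{for a.a. } t \le T.
\end{equation*}

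\textbf{Step 4: Invert using completeness.} By Theorem \ref{complete1}, completeness provides the left inverse $\bar K^{-1}_{\mathrm{left}}(t)$ of $\bar K(t)$. Right-multiplying the previous identity by $\bar K^{-1}_{\mathrm{left}}(t)$ (viewed as a map $\bR^m\to\bR^n$ acting on row vectors in the appropriate sense, i.e.\ via $\bar K(t)\bar K^{-1}_{\mathrm{left}}(t)$ being the identity on the image) yields the stated formula
\begin{equation*}
\Delta^*_t = \frac{X^*_t}{1-\gamma}\,\theta_t\cdot\bar K^{-1}_{\mathrm{left}}(t).
\end{equation*}
Finally, to confirm $\Delta^*\in\mathcal{A}$, one checks admissibility: $X^*_t \ge 0$ (resp.\ $>0$) by \eqref{optimalX}, integrability of $\Delta^*_t\bar K(t)$ in $L^2$ follows from $\int_0^T|\theta_s|^2\,ds<\infty$ combined with moment bounds on $X^*$ (the $\bQ$-martingale property from Step 2, plus the deterministic $H$ factor, makes $X^*$ have finite moments on $[0,T]$).

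The main obstacle is the bookkeeping in Step 1, where the three contributions to the drift (the Itô correction from $Z^{-1/(1-\gamma)}$ and the drift of $H$) must cancel the $\bQ$-change-of-measure correction cleanly. The matrix inversion in Step 4 is conceptually immediate once completeness is invoked, but one has to be careful that $\theta_t$ lies in the column space of $\bar K(t)^\top$, which is automatic because $\theta_t$ solves \eqref{sysmu} with $\bar\mu(t)=\bar K(t)\theta_t$, i.e.\ $\theta_t = \bar K^{-1}_{\mathrm{left}}(t)\bar\mu(t)$.
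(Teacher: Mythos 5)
Your proof is correct and follows essentially the same route as the paper's: apply It\^o's formula to the explicit expression \eqref{optimalX} for $X^*$, observe that the drift terms combine so that $dX^*_t = \frac{1}{1-\gamma}X^*_t\,\theta_t\cdot dW^{\bQ}_t$, match the diffusion coefficient with the self-financing dynamics \eqref{dX}, and solve $\Delta^*_t\bar K(t)=\frac{X^*_t}{1-\gamma}\theta_t$ via the left inverse guaranteed by completeness. Your Step 1 bookkeeping (with $c=-(1-\gamma)^{-1}$, so $-c=\frac{1}{1-\gamma}$ and the drift coefficients $\frac{2-\gamma}{2(1-\gamma)^2}-\frac{\gamma}{2(1-\gamma)^2}=\frac{1}{1-\gamma}$) and your closing admissibility check coincide with the paper's argument.
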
 
 \blue{For the proof we refer to the Appendix.}

\blue{\section{Option pricing}\label{Sec:Option}}

 In electricity markets, we can price  the classical vanilla options, i.e.~calls and puts, written on the   
assets    
$F(\cdot,T_j)$, $j = 1,\ldots,n$. Assume, for simplicity  in notations, that the discount rate $r=0$, and that the market is arbitrage-free in the spirit of Proposition 1, i.e. that there exists a probability $\bQ \sim \bP$ such that forward prices $F(\cdot,T_j)$ are $\bQ$-martingales for all $j = 1,\ldots,n$. Then, for  example, the risk-neutral price  at time $t$ of a vanilla call option with strike price $K$ and maturity $T$ written on the forward contract $F(\cdot,T_j)$, with $T_j > T$, is given by
\begin{equation} \label{callonforward}
C(t,T) = \bE_\bQ[ 
(F(T,T_j) - K )^+\ |\ {\cal F}_t ], 
\end{equation}
while the price of a put option with the same strike price $K$ and maturity $T$ is given by
\begin{equation} \label{putonforward}
P(t,T) = \bE_\bQ[ 
(K - F(T,T_j) )^+\ |\ {\cal F}_t ]. 
\end{equation}

We can immediately verify that, as in any other arbitrage-free market, the call-put parity holds in the following form.

\begin{lemma} \label{CPparity}
If $C(t,T)$ and $P(t,T)$ denote respectively the call and put prices at time $t$, both with strike price $K$ and maturity $T$ written on the forward contract $F(\cdot,T_j)$, with $T_j > T$ as in Equations \eqref{callonforward}-\eqref{putonforward}, then we have
\begin{equation*} 
C(t,T) - P(t,T) = 
F(t,T_j) - K
\end{equation*}
\end{lemma}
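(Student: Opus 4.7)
The plan is to prove the identity by using the standard pointwise algebraic relation between the positive and negative parts of a real number, and then exploit the martingale property of the forward price under $\bQ$ established in Proposition \ref{proposition1}.

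First I would recall the elementary identity $(x)^+ - (-x)^+ = x$, valid for every $x \in \bR$. Applying it with $x = F(T,T_j) - K$ gives, $\bP$-almost surely (and hence $\bQ$-almost surely, since $\bQ \sim \bP$),
\begin{equation*}
(F(T,T_j) - K)^+ - (K - F(T,T_j))^+ = F(T,T_j) - K.
\end{equation*}
Next I would take conditional expectation under $\bQ$ given $\cF_t$ on both sides; by linearity of conditional expectation we obtain
\begin{equation*}
C(t,T) - P(t,T) = \bE_\bQ[F(T,T_j) \mid \cF_t] - K.
\end{equation*}

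Finally, by Proposition \ref{proposition1}, under the equivalent martingale measure $\bQ$ the forward price $F(\cdot,T_j)$ satisfies $dF(t,T_j) = K(T_j,t)\cdot dW^\bQ_t$, so it is a $\bQ$-martingale on $[0,T_j]$; since $t \leq T < T_j$, this yields $\bE_\bQ[F(T,T_j)\mid\cF_t] = F(t,T_j)$, and substituting gives exactly the claimed identity $C(t,T) - P(t,T) = F(t,T_j) - K$.

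There is no real obstacle here: the only point worth noting is a brief integrability check to justify the use of linearity of conditional expectation, namely that $(F(T,T_j) - K)^+$ and $(K - F(T,T_j))^+$ are $\bQ$-integrable. This follows because $F(T,T_j)$ is Gaussian under $\bQ$ with variance $\int_0^T |K(T_j,s)|^2\,ds < \infty$ (from the square-integrability assumption on the kernels in Equation \eqref{St}), and hence $F(T,T_j) \in L^1(\bQ)$, which controls both positive parts.
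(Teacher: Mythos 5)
Your proof is correct and follows essentially the same route as the paper: apply the identity $(x)^+ - (-x)^+ = x$ with $x = F(T,T_j) - K$, take conditional expectation under $\bQ$, and invoke the $\bQ$-martingale property of $F(\cdot,T_j)$. Your added integrability check via the Gaussianity of $F(T,T_j)$ is a sensible refinement but does not change the argument.
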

\begin{proof}
From Equations \eqref{callonforward}-\eqref{putonforward}, it easily follows that 
\begin{eqnarray*}
C(t,T) - P(t,T) 
& = & \bE_\bQ[ 
[(F(T,T_j) - K)^+ - (K - F(T,T_j) )^+]\ |\ {\cal F}_t ] = \\
& = & 
\bE_\bQ[ F(T,T_j) - K\ |\ {\cal F}_t ] = 
F(t,T_j) - K 
\end{eqnarray*}
by the martingale property of $F(\cdot,T_j)$.
\end{proof}

By using the Bachelier formula adapted to this context (see e.g. \cite[Eq. (3)]{CKTW}), we can also produce a closed formula for the price of call options on forwards (and consequently, by the call-put parity above, also for the price of put options). 

\begin{proposition} \label{callprice}
The price of a call option at time $t$ written on the forward contract $F(\cdot,T_j)$ with strike price $K$ and maturity $T$ with $T_j > T$ is 
\begin{eqnarray*}
C(t,T) & = & 
\sigma(t,T,T_j) (d_j N(d_j) + n(d_j)) = \\
& = & 
(F(t,T_j) - K) N(d_j) + \sigma(t,T,T_j) n(d_j)
\end{eqnarray*}
where
\begin{equation} \label{sigma3}
\sigma(t,T,T_j) :=   \sqrt{\int_t^T |K(T_j,u)|^2\ du}, \qquad d_j(t,T,T_j) := \frac{F(t,T_j) - K}{\sigma(t,T,T_j)}, 
\end{equation}
and the functions $n$ and $N$ are respectively the density and the cumulative distribution functions of a $N(0,1)$ law, i.e.
$$ n(x) := \frac{1}{\sqrt{2 \pi}} e^{- \frac12 x^2}, \qquad N(y) := \int_{-\infty}^y n(x)\ dx. $$
Moreover, the hedging portfolio for this call option is composed by the only asset $F(\cdot,T_j)$ ( together with the money market account), and the quantity of this asset to be held at time $t$ is $N(d_j(t,T,T_j))$.
\end{proposition}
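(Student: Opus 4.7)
My plan is to prove the two assertions separately: first the pricing formula, then the hedging strategy. Both rely on the fact established in Proposition \ref{proposition1} that, under $\bQ$,
\[
F(T,T_j) = F(t,T_j) + \int_t^T K(T_j,u)\cdot dW^\bQ_u,
\]
and that the integrand is a deterministic vector function. Consequently, conditionally on $\mathcal F_t$, $F(T,T_j)$ is Gaussian with mean $F(t,T_j)$ and variance
\[
\sigma(t,T,T_j)^2 = \int_t^T |K(T_j,u)|^2\, du,
\]
because the components of $W^\bQ$ are independent standard Brownian motions and Itô's isometry gives exactly this variance.

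For the price, I would then invoke the standard lemma on a Gaussian random variable $X\sim \cN(\mu,\sigma^2)$: $\bE[(X-K)^+] = (\mu-K) N\!\left(\tfrac{\mu-K}{\sigma}\right) + \sigma\, n\!\left(\tfrac{\mu-K}{\sigma}\right)$, derived by splitting the integral at $K$ and completing the square. Applying this to the conditional law above with $\mu = F(t,T_j)$ and $\sigma = \sigma(t,T,T_j)$ yields both expressions for $C(t,T)$ at once, since $d_j = (F(t,T_j)-K)/\sigma(t,T,T_j)$.

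For the hedging part, the key observation is that $(F(s,T_j))_{s\in[t,T]}$ is a Gaussian $\bQ$-martingale with deterministic quadratic variation, hence Markov under $\bQ$. Therefore the conditional expectation defining $C(t,T)$ admits the representation $C(t,T) = c(t,F(t,T_j))$ for an explicit smooth function $c(t,f) := (f-K)N(d) + \sigma(t,T,T_j)\, n(d)$ with $d = (f-K)/\sigma(t,T,T_j)$. Applying Itô's formula,
\[
dC(t,T) = \left(\frac{\partial c}{\partial t} + \frac12 |K(T_j,t)|^2 \frac{\partial^2 c}{\partial f^2}\right)dt + \frac{\partial c}{\partial f}\, dF(t,T_j),
\]
and since $C(\cdot,T)$ is a $\bQ$-martingale the drift must vanish, so the martingale part is $\tfrac{\partial c}{\partial f}\, dF(t,T_j)$. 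A direct computation, using $n'(d) = -d\, n(d)$ and $\partial d/\partial f = 1/\sigma$, gives the classical Bachelier identity $\tfrac{\partial c}{\partial f} = N(d)$, since the two extra terms $(f-K)n(d)/\sigma$ and $-(f-K)n(d)/\sigma$ cancel exactly. Hence the self-financing portfolio holding $N(d_j(t,T,T_j))$ units of $F(\cdot,T_j)$ and the remaining wealth in the bank replicates the call, which establishes that it is the unique hedging portfolio.

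I do not expect significant obstacles: the only subtle point is justifying that the replication only uses $F(\cdot,T_j)$ despite the market being driven by an $n$-dimensional $W^\bQ$ and containing $m$ traded forwards. This is taken care of automatically by the Markov-plus-Itô argument above, since the martingale $C(\cdot,T)$ is a function of $F(\cdot,T_j)$ alone and Itô's formula produces a unique stochastic integrand against $dF(t,T_j)$; the other traded forwards receive zero weight because the drift vanishes identically once we plug in the explicit $c(t,f)$.
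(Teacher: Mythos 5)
Your proof is correct and follows essentially the same route as the paper: establish that, under $\bQ$, $F(T,T_j)$ conditionally on $\mathcal F_t$ is Gaussian with mean $F(t,T_j)$ and variance $\sigma^2(t,T,T_j)$, then apply the Bachelier call-price and delta formulas. The only difference is that the paper delegates the Bachelier formula and the hedging ratio to the cited reference \cite{CKTW}, whereas you derive them explicitly (Gaussian truncated-mean lemma, and the cancellation giving $\partial c/\partial f = N(d)$), which is a welcome but not substantively different elaboration.
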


\begin{proof}
It is sufficient to notice that, under the risk-neutral probability $\bQ$, the distribution of $F(T,T_j)$ is Gaussian with mean $F(t,T_j)$ and variance $\sigma^2(t,T,T_j)$,  with $\sigma(t,T,T_j)$ given by Equation \eqref{sigma3}. The conclusion about the call price follows then from \cite[Eq. (3)]{CKTW}, and the hedging portfolio from \cite[Section 5.1]{CKTW}.
\end{proof}

 \begin{remark} 
Notice that in Proposition \ref{callprice} we did not need to assume completeness. In fact, even if the market is not complete, the forward price $F(\cdot,T_j)$ will have distribution $N(F(t,T_j),\sigma^2(t,T,T_j))$ regardless on the particular equivalent martingale measure $\bQ$ governing prices. This is different e.g. from stochastic volatility models, where different equivalent measures $\bQ$ would give different distributions to the volatility process, thus the call price would depend on which particular distribution the volatility has under the particular $\bQ$ chosen. As a consequence, in our model call prices are invariant with respect to $\bQ$, as the pricing formulas in Proposition \ref{callprice} do not depend on $\bQ$ but ultimately only on the kernel vector $K(T_j,\cdot)$, which does not vary with $\bQ$ (and of course on $F(t,T_j)$ and $K$). Other more general derivatives, instead, could have a no-arbitrage price possibly dependent on $\bQ$. 
\end{remark}

 \begin{remark} 
The results above are true for discounted prices, i.e. for a situation where the interest rate applied to prices is $r \equiv 0$. When one wants to derive explicitly the same results taking explicitly into account the presence of a risk-free short rate $r(u)$, $u \in [t,T]$, it is not difficult to see that in the case when $r$ is deterministic the call-put parity formula modifies into
$$ C(t,T) - P(t,T) = e^{-\int_t^T r(u)\ du} (
F(t,T_j) - K) $$
and the call price into 
$$ C(t,T) = e^{-\int_t^T r(u)\ du}  \sigma(t,T,T_j) (d_j(t,T,T_j) N(d_j(t,T,T_j)) + n(d_j)(t,T,T_j)). $$
%
The more general case when the intensity $(r_u)_u$ is a stochastic process, possibly dependent on $F(\cdot,T_j)$, can be treated with the usual change-of-numeraire techniques, see e.g. \cite[Chapter 26]{Bjork}. Here we decided not to present this in detail so as not to further burden the reader. 
\end{remark}

However, in electricity markets other illiquid products can be found as well, usually more involved than the vanilla products seen above. A notable example is the so-called Reliability Option, which is present in several national markets (see e.g. \cite{AFFV}), and which has the peculiarity that its payoff is defined on the spot price $S$ over the time span $[T_1,T_2]$, with $0 \leq T_1 < T_2 \leq \bar T$. 
More in detail, the payoff of a Reliability Option with fixed strike price $K$ written on the time span $[T_1,T_2]$ is found to be equal to \cite{AFFV}
\begin{eqnarray} 
RO(t,T_1,T_2) & = & \bE_\bQ\left[\left. \int_{T_1}^{T_2} 
(S_T - K)^+ dT\ \right| {\mathcal F}_t \right] = \nonumber \\
& = & \int_{T_1}^{T_2} 
\bE_\bQ\left[ (S_T - K)^+\ |\ {\cal F}_t  \right] dT. \label{RO}
\end{eqnarray}
\begin{proposition}
The price at time $t = 0$ of the Reliability Option as defined in Equation \eqref{RO} is equal to
$$ RO(0,T_1,T_2) = \int_{T_1}^{T_2} 
\sigma(0,T,T) (d_T N(d_T) + n(d_T))\ dT $$
with $\sigma(0,T,T)$ as defined in Equation \eqref{sigma3}, 
$$ d_T := \frac{\varphi_\bQ(T) - K}{\sigma(0,T,T)} $$
and $\varphi_\bQ$ deterministic seasonality of $S$ as defined in Equation \eqref{newphifinal}.
\end{proposition}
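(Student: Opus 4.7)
The plan is to reduce the Reliability Option to an integral of Bachelier-type expectations on the spot price, and then to apply the same computation that produced the call price in Proposition \ref{callprice}.

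First, I would start from the defining Equation \eqref{RO}, which, by the Fubini-Tonelli theorem (legitimate because $(S_T - K)^+$ is nonnegative), gives
\begin{equation*}
RO(0,T_1,T_2) = \int_{T_1}^{T_2} \bE_\bQ\left[(S_T - K)^+\right]\, dT.
\end{equation*}
The task is therefore to compute $\bE_\bQ[(S_T - K)^+]$ for each fixed $T \in [T_1,T_2]$ and integrate the result.

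Next, I would use the representation of the spot price under $\bQ$ provided by Equation \eqref{newphi2} of Proposition \ref{proposition1}, namely
\begin{equation*}
S_T = \int_0^T K(T,s) \cdot dW_s^\bQ + \varphi_\bQ(T).
\end{equation*}
Since the integrand $K(T,\cdot)$ is deterministic and square-integrable on $[0,T]$, this is a Wiener integral, and consequently $S_T$ is Gaussian under $\bQ$ with mean $\varphi_\bQ(T)$ and variance
\begin{equation*}
\mathrm{Var}_\bQ[S_T] = \int_0^T |K(T,u)|^2\, du = \sigma^2(0,T,T),
\end{equation*}
where the last equality uses the definition of $\sigma(t,T,T_j)$ in Equation \eqref{sigma3} specialized to $t = 0$ and $T_j = T$.

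At this point, the computation of $\bE_\bQ[(S_T - K)^+]$ is a routine Bachelier calculation for a Gaussian random variable with the above mean and variance; it is in fact essentially the same computation as that carried out inside the proof of Proposition \ref{callprice}, only with the mean $F(t,T_j)$ replaced by $\varphi_\bQ(T)$. The outcome is
\begin{equation*}
\bE_\bQ[(S_T - K)^+] = \sigma(0,T,T)\bigl(d_T N(d_T) + n(d_T)\bigr),
\end{equation*}
with $d_T = (\varphi_\bQ(T) - K)/\sigma(0,T,T)$. Substituting this into the integral representation of $RO(0,T_1,T_2)$ yields the claimed formula. I do not anticipate any serious obstacle: the only delicate point is the use of Fubini, which is immediate by nonnegativity, and the continuity (hence measurability and integrability on $[T_1,T_2]$) of $T \mapsto \sigma(0,T,T)(d_T N(d_T) + n(d_T))$, which follows from the standing assumption $\int_0^T |K(T,u)|^2\, du < \infty$ together with the continuity of $\varphi_\bQ$ inherited from $\varphi$ via Equation \eqref{phiQ}.
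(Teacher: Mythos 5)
Your proposal is correct and follows essentially the same route as the paper: identify the $\bQ$-law of $S_T$ as $N(\varphi_\bQ(T),\sigma^2(0,T,T))$ from the Wiener-integral representation in Equation \eqref{newphi2}, apply the Bachelier formula of Proposition \ref{callprice} under the integral sign, and integrate over $[T_1,T_2]$. The extra care you take with Fubini-Tonelli and with the measurability of the integrand is a harmless elaboration of what the paper leaves implicit.
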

\begin{proof}
The proof follows from the fact that, under $\bQ$, we have 
that $S_T \sim N(\varphi_\bQ(T), \sigma^2(0,T,T) )$. It is then sufficient to use this in the right-hand side of Equation \eqref{RO} with the Bachelier formula already used in Proposition \ref{callprice}. 
\end{proof}

\section{Conclusions} 

We introduce a non-Markovian model for the spot price of electricity, based on a $n$-factor Gaussian Volterra process, obtained by stochastic integrals of Volterra kernels. As is customary in electricity markets, we assume that the spot price is not traded, but forward contracts with maturities $T_1 < T_2 < \ldots < T_m$ are traded in the market. We discuss conditions for absence of arbitrage and completeness. We characterize the dynamics of forward prices when there is no arbitrage in terms of an equivalent martingale measure, finding that in that case the risk-neutral dynamics of forward prices has zero drift and diffusion coefficients equal to the Volterra kernels with a variable frozen to the forward maturity. Moreover, we find that completeness is equivalent to the invertibility (in a generalized sense) of the matrix formed by the diffusion coefficients of the traded forwards.  In all the above discussion, we modeled forward prices as having instantaneous delivery, while real contracts in electricity markets deliver electricity during a time span $[T_1,T_2]$: however, this is a common market practice, which we show to be justified in our framework, as the tracking error between the real asset and the one that we model goes to zero as a power of the period length $T_2 - T_1$, thus this approximation is particularly robust for hourly or daily contracts. \blue{However, when the delivery period is longer, e.g of the order of one month, one trimester or one year, in which case the approximation of Lemma \ref{2H} can possibly be not accurate enough, we can still use the results of Sections \ref{aac} and \ref{sec2.2}, but take care of substituting the kernels {$K(T_j,t)$}, relative to ``instantaneous'' forwards introduced in Equation \eqref{S-F}, with the kernels $\bar K(t,T_j,T_k)$ in Equation \eqref{barK}, relative to the forward contracts which are actually traded in the market.} \\
\blue{After this, we analyze in detail examples of the most used Gaussian Volterra processes in our particular framework, especially concerning market completeness. The case when the Volterra kernels have  functional forms which are different among each other is usually quite straightforward, so we just give one example of this case in Section \ref{mixed}. Instead, the more interesting case is when the Volterra kernels refer to the same kind of processes. Thanks to a novel representation of Ornstein-Uhlenbeck (OU) processes driven by Gaussian Volterra processes, we show how to treat fractional Brownian motions as a particular case of fractional OU processes. We then pass to analyze particular instances of fractional processes, starting from Riemann-Liouville (RL) processes: in this case, we find that $n$ independent RL processes generate a complete market if their Hurst exponents are all different from each other. The case of RL-driven OU processes is more involved, and we are able to prove that two such processes generate a complete market in the case when the mean-reversion speed is the same and the Hurst exponents are different. We use the same framework, i.e. two OU processes with same mean-reversion speed, also in the case of fractional Brownian motions  with Hurst exponents both greater than 1/2;  
 we prove the market completeness in a quite straightforward way, as the diffusion matrix is invertible for all times.  Instead, we do not present the case of OU processes driven by fBm with Hurst exponent smaller than 1/2. We only consider two fBm driving the spot prices, with different Hurst exponents smaller than 1/2, we find that the diffusion matrix is invertible at all times but one, but the assumptions for completeness are still satisfied.\\ 
 Finally, we find the optimal investment in forward markets, analyzing in detail the particular case of CRRA utility functions, for which we characterize explicitly the optimal portfolio strategy. We also present option pricing formulas: in the particular case of vanilla calls and puts written on a traded forward, we find that the price is unique also in incomplete markets and given by a version of the Bachelier formula, and we also give the hedging strategy. However, in order to have a unique price for other kind of derivatives, possibly more exotic, completeness should be assumed: as an example of this, we give the pricing formula for a particular option in electricity market, called reliability option.}\\ 
\blue{We underline that if we substitute the kernels $K(T_j,t)$, relative to ``instantaneous'' forwards introduced in Equation \eqref{S-F}, with the kernels $\bar K(t,T_j,T_k)$ in Equation \eqref{barK}, 
one can carry out the same kind of computation that we show in Sections \ref{GVP}, \ref{Sec:Optimal}, and \ref{Sec:Option} for the specific situation.}
\appendix

\section{Appendix}\label{appendix}

 \emph{Proof of Proposition \ref{proposition1}.} 

\noindent  The process defined in Equation \eqref{WQ} is a $\bQ$-Brownian motion according to the Girsanov theorem \cite{Bjork}, and Equations \eqref{newphi2} and \eqref{phiQ} follow from this and from Equation \eqref{St}.  

 Assume now that there is no arbitrage, i.e. Equation \eqref{S-F} is true for all $j = 1,\ldots,m$. This, together with Equation \eqref{newphi2}, implies  
\begin{eqnarray*}
\begin{split}
   F(t,T_j) = & \bE_{\bQ}\left[ \left. 
   \int_0^{T_j} K(T_j,s) \cdot dW_s^{\bQ} + \varphi_\bQ(T_j) \right| \cF_t \right] \\
   & = 
   \int_0^t K(T_j,s) \cdot dW_s^{\bQ} + \varphi_\bQ(T_j).
   \end{split}
\end{eqnarray*}
whence  the dynamics of $\{F(t,T_j), t \in [0,T_j] \}$ under $\bQ$ is given by 
Equation \eqref{FQ}, with $F(\cdot,T_j)$ resulting in a $\bQ$-martingale such that $F(T_j,T_j) = S_{T_j}$. By substituting the definition of $W^\bQ$, given by Equation \eqref{WQ}, in the dynamics \eqref{FQ}, we obtain
%
\begin{equation}\label{dFP} 
dF(t,T_j) = K(T_j,t) (dW_t + \theta_t dt) = K(T_j,t) \theta_t dt + K(T_j,t) dW_t,
\end{equation}
which, compared with Equation \eqref{FP}, gives the desired conclusion.  

  Conversely, now assume that Equations \eqref{FQ} and \eqref{musigma} hold, which consistently give the dynamics in Equation \eqref{FP}. Then, combining  Equation \eqref{FQ} with its terminal condition and Equation \eqref{newphi2}, we obtain  
$$ F(t,T_j) = S_{T_j} - \int_t^{T_j} K(T_j,s) \cdot dW_s^\bQ = \varphi_\bQ(T_j) + \int_0^t K(T_j,s) \cdot dW_s^\bQ. $$
Since $\varphi_\bQ(T_j)$ is deterministic and $\int_0^t |K(T_j,s)|^2\ ds < + \infty$ for all $t \leq T_j < \bar T$, it is clear that $F(\cdot,T_j)$ are $\bQ$-martingales for all $j = 1,\ldots,m$. 

\vspace{2ex}

\noindent \emph{Proof of Theorem \ref{Kmatrix}.}

\noindent We proceed by induction. Obviously, the statement is true for $n=1$. Let us assume that it is true for $n-1$. 
We can transform our determinant $\Delta$ as follows:

$$\Delta(x_1, \ldots, x_n)=\prod_{i=1}^n x_i ^{\alpha_1} \tilde \Delta (x_1, \ldots, x_n),$$
where 

$$\tilde \Delta_n(x_1, \ldots, x_n)= 
\begin{vmatrix}
1 & 1 & \ldots & 1\\ 
x_1^{\alpha_2-\alpha_1 } & x_2^{\alpha_2-\alpha_1 } & \ldots & x_n^{\alpha_2-\alpha_1} \\
\vdots & \vdots & \vdots & \vdots \\ 
x_1^{\alpha_n-\alpha_1} & x_2^{\alpha_n-\alpha_1} & \ldots & x_n^{\alpha_n-\alpha_1}
\end{vmatrix}.$$
Denote $\beta_i= \alpha_i -\alpha_1$, $i=1, \ldots, n$, and transform $\tilde \Delta_n$ into
\begin{align*}
    \tilde \Delta_n(x_1, \ldots, x_n) &= 
\begin{vmatrix}
1 & 0 & \ldots & 0\\ 
x_1^{\beta_2} & x_2^{\beta_2}-x_1^{\beta_2} & \ldots & x_n^{\beta_2}-x_1^{\beta_2} \\
\vdots & \vdots & \vdots & \vdots \\ 
x_1^{\beta_n} & x_2^{\beta_n}-x_1^{\beta_n} & \ldots & x_n^{\beta_n}-x_1^{\beta_n}
\end{vmatrix}
\\ \notag
&& \\ 
&= 
\begin{vmatrix}
 x_2^{\beta_2}-x_1^{\beta_2} & \ldots & x_n^{\beta_2}-x_1^{\beta_2} \\
\vdots & \vdots & \vdots \\ 
  x_2^{\beta_n}-x_1^{\beta_n} & \ldots & x_n^{\beta_n}-x_1^{\beta_n}
\end{vmatrix}.
\end{align*}
Since we have assumed that the statement holds for $n-1$ 
we can assert that

\begin{align*}
\frac{\partial^{n-1}\tilde \Delta_n(x_1, \ldots, x_n)}{\partial x_2 \ldots \partial x_n}&= 
\begin{vmatrix}
\beta_2 x_2^{\beta_2-1} & \ldots & {\beta_2}x_n^{\beta_2-1} \\
\vdots & \vdots & \vdots  \\ 
\beta_n x_2^{\beta_n-1} & \ldots & x_n^{\beta_n-1}
\end{vmatrix}
\\ \notag
&& \\ 
&= \frac{\prod_{i=2}^n \beta_i}{\prod_{i=2}^n x_i}
\begin{vmatrix}
x_2^{\beta_2} & \ldots & x_n^{\beta_2} \\
\vdots & \vdots & \vdots  \\ 
x_2^{\beta_n} & \ldots & x_n^{\beta_n}
\end{vmatrix}>0.
\end{align*}
Now, let us consider subsequently the derivative, starting with 
$$ 0 <\frac{\partial^{n-1}\tilde \Delta_n(x_1, \ldots, x_n)}{\partial x_2 \ldots \partial x_n}= \frac{\partial}{\partial x_n}\left(\frac{\partial^{n-2} \tilde \Delta_n(x_1, \ldots, x_n)}{\partial x_2 \ldots \partial x_{n-1}}\right).$$
We see that $$\frac{\partial^{n-2} \tilde \Delta_n(x_1, \ldots, x_n)}{\partial x_2 \ldots \partial x_{n-1}}$$ is increasing in $x_n$, but 

$$ \frac{\partial^{n-2} \tilde \Delta_n(x_1, \ldots, x_n)}{\partial x_2 \ldots \partial x_{n-1}}=
\begin{vmatrix}
\beta_2 x_2^{\beta_2-1} & \ldots & {\beta_2}x_{n-1}^{\beta_2-1} & x_n^{\beta_2}-x_1^{\beta_2} \\
\vdots & \vdots & \vdots & \vdots  \\ 
\beta_n x_2^{\beta_n-1} & \ldots & x_{n-1}^{\beta_n-1} & x_n^{\beta_n}-x_1^{\beta_n}
\end{vmatrix},$$
and it equals zero if $x_n=x_1$.
Therefore, $$\frac{\partial^{n-2} \tilde \Delta_n(x_1, \ldots, x_n)}{\partial x_2 \ldots \partial x_{n-1}}>0$$ for $x_n>x_1$, that is our case.
Thus, we can proceed in the same way and get that 
$$\frac{\partial \tilde \Delta(x_1, \ldots, x_n)}{\partial x_2}>0,$$ for $x_3>x_1$, that is our case.
But $\tilde \Delta_n(x_1, \ldots, x_n)=0$  if $x_2=x_1$, therefore $ \tilde \Delta(x_1, \ldots, x_n)>0$.


\vspace{2ex}
\noindent \emph{Proof of Theorem \ref{OWP}.}
Since the wealth process 
of a self-financing portfolio is a $\bQ$-martingale, we have 
\begin{equation*}
X_t^*= \bE^{\bQ}[X_T^*|\cF_t]=\frac{x}{H_0}\bE^{\bQ}\left[Z_T^{-\frac{1}{1-\gamma}}|{\cF_t}\right].   
  \end{equation*}
By the abstract Bayes' formula, we obtain
\begin{equation}\label{Xt*}
X_t^*= \frac{x}{H_0} \frac{\bE^{\bP} \left[ Z_T^{-\beta}|\cF_t\right]}{Z_t}.
\end{equation}
 From \eqref{ZT} we obtain 
$$ Z_T^{-\beta}=\exp \left\{\int_0^T \beta \theta_s \cdot \;dW_s +\frac{1}{2} \int_0^T  \beta |\theta_s|^2\; ds \right\},$$
that looks almost like a Radon-Nikodym derivative: this leads us to define the $P$-martingale $Z^0$ as 
$$ Z^0_t= \exp \left\{\int_0^t \beta \theta_s \cdot \;dW_s - \frac{1}{2} \int_0^t  \beta^2 |\theta_s|^2 \; ds \right\},$$
whose dynamics is 
$$dZ^0_t=Z^0_t \beta \theta_t \cdot \;dW_t.$$
Thus, now we have that
\begin{equation}\label{Zb}
Z_T^{-\beta}=Z^0_T \exp \left\{ \frac{1}{2}\int_0^T \frac{\beta}{1-\gamma} |\theta_t|^2 \; dt \right\},
\end{equation}
 Now, from \eqref{Zb} 
we have
\begin{align*}
   \bE^{\bP}\left[ Z_T^{-\beta} |\cF_t\right]&= \bE^{\bP}\left[ Z_T^0 e^{\frac{1}{2}\int_0^T \frac{\beta}{1-\gamma} |\theta_s|^2 \; ds}| \cF_t\right]\\
   &=Z_t^0 e^{\frac{1}{2}\int_0^t \frac{\beta}{1-\gamma} |\theta_s|^2 ds} e^{\frac{1}{2}\int_t^T \frac{\beta}{1-\gamma} |\theta_s|^2 ds}\\
   &= Z_t^{-\beta}   e^{\frac{1}{2}\int_t^T \frac{\beta}{1-\gamma} |\theta_s|^2 ds}.
\end{align*}
  From Equation \eqref{Ht}, we have that
$$ H_t = \bE_\bP\left[ \left. e^{\int_t^T \beta \theta_s \cdot dW_s - \frac12 \int_t^T \beta |\theta_s|^2 ds} \right| {\mathcal F}_t \right] $$
Since $\theta$ is deterministic, we have that $\int_t^T \beta \theta_s \cdot dW_s$ is Gaussian and independent of ${\mathcal F}_t$, thus
$$ H_t = \exp\left( \frac12 \beta (1 - \beta) \int_t^T |\theta_s|^2 ds \right) $$
and Equation \eqref{Ht} follows from $1 - \beta = \frac{1}{1-\gamma}$. 
Putting all of this in Equation \eqref{Xt*} we obtain
$$X_t^*= \frac{x}{H_0} Z_t^{-1} Z_t^{-\beta} H_t=  x\frac{H_t}{H_0} Z_t^{-\frac{1}{1-\gamma}}.$$
%

\vspace{2ex}
\noindent \emph{Proof of Theorem \ref{optimalportfolio}.}
Starting from \eqref{optimalX} for the optimal wealth process, by the It\^o formula we have that 
\begin{equation*}
    d X_t^*= x Z_t^c \frac{H_t}{H_0} \left(\mu_H(t)  + \frac{1}{2} c(c-1)|\theta_t|^2\right) dt - x Z_t^c \frac{H_t}{H_0}  c \theta_t \cdot dW_t,
\end{equation*}
where $c=-(1-\gamma)^{-1}$. 
We then have
\begin{eqnarray}\label{dXPstar}
dX_t^* & = & X_t^* (- c \theta_t \cdot dW_t - c |\theta_t|^2 dt) = - c X_t^*\theta_t \cdot dW^\bQ_t 
\end{eqnarray}
By comparing the diffusion part in \eqref{dX} and \eqref{dXPstar}, we have
\begin{equation}\label{sysDK}
   \Delta_t \bar K(t)=-c X_t^*\theta_t.
\end{equation}
System \eqref{sysDK} admits solution (for a.a. $t \leq T < T_1$) given that by Theorem \ref{complete1} there exists a left inverse of $\bar K(t)$ for a.a. $t \leq T_1$.
Thus, taking
$$\Delta_t=-c X_t^* \theta_t \cdot \blue{\bar K_{\mathrm{left}}(t)^{-1}},$$
we obtain \eqref{sysDK}; this solution is our $\Delta_t^*$.
%
%
Since $X^*$ appears linearly in $\Delta^*$, multiplied by a square integrable deterministic function of time, we also have that Equation \eqref{dX} has a unique strong solution, thus $\Delta^*$ is admissible and optimal.

\end{document}